\newtheorem{thm}{Theorem}[section]
\newtheorem{lem}[thm]{Lemma}
\newtheorem{cor}[thm]{Corollary}
\newtheorem{definition}[thm]{Definition}
\newtheorem{exmp}[thm]{Example}
\newcommand{\scat}[1]{\mathbb{#1}}
\newcommand{\EC}[1]{\chi\left({#1}\right)}
\newcommand{\mr}[1]{\mathrm{#1}}
\newcommand{\ob}{\mr{Ob}\,}        
\newcommand*\mycirc[1]{
\tiny
  \begin{tikzpicture}
      \node[draw,circle,inner sep=1pt] {#1};
   \end{tikzpicture} \normalsize}
\newcommand{\Hom}{\mathrm{Hom}}
\begin{document}

\newcolumntype{x}[1]{>{\centering\arraybackslash\hspace{0pt}}p{#1}}


\title[Euler characteristic]{Euler characteristic for weak n-categories and $\left(\infty,1\right)$-categories}
\author[A. Gonzalez, G. Necoechea, A. Stratmann]{Alex Gonzalez
\and Gabe Necoechea
\and Andrew Stratmann}
\thanks{The authors were partially supported by the Kansas State University Summer Undergraduate Mathematics Research program and NSF grant DMS-1262877}

\maketitle


\begin{abstract}
	The Euler characteristic was defined for finite strict n-categories by Leinster using the theory of enriched categories. This was an extension of some of his earlier work, which defined Euler characteristic for finite categories. Building on Leinster's work, we extend the notion of Euler characteristic to certain finite weak 2-categories and present a sketch of a similar extension to weak n-categories. We also discuss the extension of the Euler characteristic to certain finite $\left(\infty,1\right)$-categories. 
\end{abstract}

\section*{Introduction}
Since first being used to classify polyhedra over three hundred years ago, the Euler characteristic has found applications in and extensions to various combinatoric and topological settings. This has made the Euler characteristic a useful topological invariant in the field of algebraic topology, where algebraic and combinatoric models of topological spaces are often sought. In recent times, categories and even higher categories have been used as (building blocks for) models of topological spaces, motivating a notion of Euler characteristic for categories and higher categories. Additionally, the Euler characteristic has properties which are analogous to those of cardinality, and it can be considered a fundamental notion of size. Viewed as a generalization of the cardinality of (finite) sets, it is natural to attempt to define the Euler characteristic for finite categories, which may be thought of as one level above finite sets in a hierarchy of ``categorification.'' In 2008, Leinster extended the Euler characteristic to a class of finite categories in \cite{LEICAT}. In \cite{LEIENRICH}, he introduced the concept of magnitude for enriched categories, which included the notion of Euler characteristic for certain strict n-categories $\left(n < \infty\right)$. This general notion of magnitude is invariant under equivalence of enriched categories and also well-behaved with respect to both products and coproducts. 
While Leinster's aforementioned work addresses a substantial portion of the (finite) categorical landscape, there remain mathematical objects outside of the scope of his work which are of significant interest. In particular, within higher category theory, there is significant interest in weak n-categories $\left (n < \infty\right)$ and $\left(\infty,1\right)$-categories. To the best of our knowledge, neither of these can be handled as the enriched categories for which \cite{LEIENRICH} provides a notion of magnitude. In what follows, we will develop the Euler characteristic in higher category theory. In particular, we present a setup for the Euler characteristic of a weak n-category and show that this is invariant under an appropriate notion of equivalence for n-categories. We also introduce the reader to $\left(\infty,1\right)$-categories and some of the beginning steps toward Euler characteristic for these creatures. 

\section{Basic Definitions}
\subsection{Category Theory}
	A \emph{(small) category} $\scat{A}$ consists of the following data:
	\begin{enumerate}
	\item A set $\ob\left(\scat{A}\right)$ of objects;
	\item For all $\left(x,y\right) \in \ob\left( \scat{A}\right) \times \ob\left( \scat{A}\right)$, a set $\Hom\left(x,y\right)$ of morphisms (or arrows)  from x to y, sometimes called the Hom-set from x to y;
	\item A composition rule $\circ \colon \Hom\left(y,z\right) \times \Hom\left(x,y\right) \to \Hom\left(x,z\right)$ for all $x, y, z \in \ob \left(\scat{A}\right)$;
	\end{enumerate} 
	and satisfies the following axioms:
	\begin{enumerate}
	\item Whenever such compositions make sense, $h \circ \left(g \circ f\right) = \left(h \circ g\right) \circ f$;
	\item For every $x \in \ob\left( \scat{A}\right)$, there is a morphism $1_x \in \Hom\left(x,x\right)$ such that $f \circ 1_x = f$ for all $f \in \Hom\left(x,y\right)$ and $1_x \circ g = g$ for all $g \in \Hom\left(z,x\right)$. We call $1_x$ the \emph{identity} morphism (on $x$). 
	\end{enumerate}
	Henceforth, small categories will generally be referred to simply as ``categories.'' It should be noted that $\Hom\left(x,y\right)$ may be empty, in which case there are no morphisms from $x$ to $y$, though there may be morphisms from $y$ to $x$. Any morphism in $\Hom\left(x,x\right)$ is called an \emph{endomorphism} of $x$. Additionally, we denote by $\Hom\left(\scat{A}\right)$ the collection of all morphisms in the category. If $|\Hom\left(\scat{A}\right)|$ is finite then we say $\scat{A}$ is \emph{finite}. The existence of an identity morphism for each object allows us to bound the number of objects from above by $|\Hom\left(\scat{A}\right)|$, thus a finite category has both a finite number of arrows and a finite number of objects. (This is, in fact, the ``right'' definition among the two natural choices: requiring only finitely many objects does not prevent us from having infinitely many morphisms.) \par
	We now present a few examples of categories. When we present diagrams of categories, the identity morphisms will be omitted. 
	
	\begin{exmp}
	\label{exmp: 1 category}
	The category $\mathbf{1}$ consists of one object, $*$, and its identity morphism. 
	\end{exmp}
	
	\begin{exmp}
	\label{exmp: terminal or initial}
	$\xymatrix{x \ar[r]^{f} & y }$ \\ is a category with two objects, $x$ and $y$. It has three morphisms: $1_{x} \in \Hom\left(x,x\right)$, $1_{y} \in \Hom\left(y,y\right)$, and $f \in \Hom\left(x,y\right)$. There are no morphisms in $\Hom\left(y,x\right)$. 
	\end{exmp}
	
	\begin{exmp}
	\label{exmp: three object}
	$\xymatrix{x \ar@/^/[r]|{f} \ar@/_2pc/[rr]|{h \circ f} & y \ar@/^/[l]|{g} \ar[r]^{h} & z}$
	\end{exmp} 
	
	\begin{exmp}
	\label{exmp: set as category}
	A set $S$ can be considered as a category by taking the elements of $S$ as the objects and the identity morphisms as the only morphisms. In general, a category whose only morphisms are identities is called a \emph{discrete category}.
	\end{exmp}
	
	\begin{exmp}
	\label{exmp: product}
	Given two categories, $\scat{A}$ and $\scat{B}$, we can construct a product category $\scat{A} \times \scat{B}$ which, for $\scat{A}$ and $\scat{B}$ small, has the following data:
	\begin{enumerate}
	\item $\ob\left(\scat{A} \times \scat{B}\right) = \left\{\left(a,b\right)\colon a \in \ob\left( \scat{A}\right), b \in \ob \left(\scat{B}\right)\right\}$
	\item For all $\left(x,y\right)$ and $\left(z,w\right) \in \ob \left(\scat{A} \times \scat{B}\right)$, \[\Hom\left(\left(x,y\right),\left(z,w\right)\right) = \\ \left\{\left(f,g\right)\colon f \in \Hom\left(x,z\right), g \in \Hom\left(y,w)\right)\right\}. \]
	\end{enumerate}
 Composition of morphisms is ``component-wise,'' given by composition in $\scat{A}$ in the first component and $\scat{B}$ in the second component. 
	\end{exmp}
	
	\begin{exmp}
	\label{exmp: coproduct}
	Given two categories, $\scat{A}$ and $\scat{B}$, we can construct a disjoint union category $\scat{A} \coprod \scat{B}$, which for $\scat{A}$ and $\scat{B}$ small, has the following data:
	\begin{enumerate}
	\item $\ob\left( \scat{A}\right) \coprod \scat{B} = \ob\left( \scat{A}\right) \coprod \ob\left( \scat{A}\right)$ is just the disjoint union of the object sets.
	\item $\Hom_{\scat{A} \coprod \scat{B}}\left(x,y\right) = 
		\begin{cases} 
		\Hom_{\scat{A}}\left(x,y\right) & x,y \in \ob\left( \scat{A}\right) \\
		\Hom_{\scat{B}}\left(x,y\right) & x,y \in \ob\left(\scat{B}\right) \\ 
		\varnothing & otherwise
		\end{cases}$
	\end{enumerate}
	\end{exmp}
	
	\begin{exmp}
	\label{exmp: monoid as category}
	We can treat a monoid as a category with one object, $*$. The morphisms in $\Hom\left(*,*\right)$ are the elements of the monoid, and the composition of morphisms follows the composition rule of the monoid. The identity element is the identity morphism $1_{*}$. 
	\end{exmp}
	
	\begin{exmp}
	\label{exmp: opposite category}
	Given a category $\scat{A}$, we have another category $\scat{A}^{op}$, the \emph{opposite category} of $\scat{A}$. This category has the same objects and composition rule as $\scat{A}$, but the direction of each morphism is reversed.
	\end{exmp}
	
	\begin{definition}
	An object $x$ in a category $\scat{A}$ is called \emph{terminal} $\left(\text{resp.} \ \emph{initial}\right)$ if $|\Hom\left(w,x\right)| = 1 \left(\text{resp.} \ |\Hom\left(x,w\right)| = 1 \right)$ for all $w \in \ob\left( \scat{A}\right)$. 
	\end{definition}

	\begin{definition}
	Two objects, $x$ and $y$ are \emph{isomorphic} in $\scat{A}$ if there exist morphisms $f \in \Hom\left(x,y\right)$ and $g \in \Hom\left(y,x\right)$ such that $f \circ g = 1_y$ and $g \circ f = 1_x$. When such $f$ and $g$ exist, we call them \emph{isomorphisms}. 
	\end{definition}
	
	\begin{exmp}
	\label{exmp: group as category}
	A group is just a monoid with inverses for every element, so we can view a group as a one-object category in which all the arrows are isomorphisms. 
	\end{exmp}
	
	\begin{definition}
	A \emph{skeleton} or \emph{skeletal subcategory} $\scat{A}_o$ of $\scat{A}$ is a full subcategory whose object set is a set of representatives of the isomorphism classes of $\scat{A}$. 
	\end{definition}
	\begin{exmp}
	\label{exmp: skeleta}
	Let $\scat{A}$ be determined by the diagram \hspace{-3em} $\xymatrix{&x \ar@/^/[r]|{f} \ar@/_2pc/[rr]|{h \circ f} & y \ar@/^/[l]|{g} \ar[r]^{h} & z}$ along with the identity morphisms. Since the only endomorphisms of x and y are identities, $g \circ f$ = $1_{x}$ and $f \circ g = 1_{y}$, i.e. $x$ and $y$ are isomorphic objects. We have two possible skeletons of $\scat{A}\colon$ 
	\begin{enumerate}
	\item \hspace{-3em} $\xymatrix{&x \ar[r]^{h \circ f} &z}$
	\item \hspace{-3em} $\xymatrix{&y \ar[r]^{h} &z}$
	\end{enumerate}
	\end{exmp}
	
	From the above example, we might be motivated to say that the two skeleta in Example 1.4 are essentially the same. We can make this notion precise for any two categories by introducing the notions of functors and natural isomorphisms.
	
	\begin{definition}
	A \emph{functor} $F\colon\scat{A}\to\scat{B}$ is a ``map between categories'' which takes objects in $\scat{A}$ to objects in $\scat{B}$ ($x \mapsto F\left(x\right)$)  and morphisms in $\scat{A}$ to morphisms in $\scat{B}$ ($f \mapsto F\left(f\right)$) and respects composition in the sense that
	\begin{enumerate}
	\item $F\left(1_x\right) = 1_{Fx}$
	\item $F\left(f\right) \circ F\left(g\right) = F\left(f \circ g\right)$ whenever one (and hence both) of these compositions exist. 
	\end{enumerate}
	\end{definition}

	To be slightly more thorough, we can consider a functor $F\colon\scat{A}\to\scat{B}$ as a collection of the following functions: 
	\begin{itemize}
	\item $F_{0}\colon\ob\left(\scat{A}\right)\to\ob\left(\scat{B}\right)$
	\item For each $\left(x,y\right) \in \ob\left( \scat{A}\right) \times \ob\left( \scat{A}\right)$, a function $F_{xy}\colon \Hom\left(x,y\right)\to \Hom\left(F_{0}\left(x\right),F_{0}\left(y\right)\right)$. 
	\end{itemize}
	In general, which function in the collection we are using will be understood, so we will omit subscripts like when working with functors. We will often specify a functor $F\colon \scat{A} \to \scat{B}$ by writing what it does to a arbitrary object $x$ of $\scat{A}$ and what it does to an arbitrary morphism of $\scat{A}$. For instance, we will write that $F \colon \scat{A} \to \scat{B}$ is given by 
	\begin{itemize}
	\item $x \mapsto F\left(x\right)$
	\item $f \mapsto F\left(f\right)$
	\end{itemize}
	
	\begin{exmp}
	\label{exmp: identity functor}
	Any category $\scat{A}$ has an identity functor $Id_{\scat{A}}\colon\scat{A}\to\scat{A}$ given by 
	\begin{itemize}
	\item $x \mapsto x$
	\item $f \mapsto f$
	\end{itemize}
	\end{exmp}
	
	\begin{exmp}
	\label{exmp: object identification functor}
	For an object $x$ in a category $\scat{A}$, we have a functor $\bar{x}\colon\mathbf{1}\to\scat{A}$ which ``picks out'' the object $x$. We will use this example in our definition of bicategories. 
	\end{exmp}
	
	\begin{exmp}
	\label{exmp: functor composition}
	Given functors $F\colon\scat{A}\to\scat{B}$ and $G\colon\scat{B}\to\scat{C}$, we have a functor $G \circ F\colon\scat{A}\to\scat{C}$ given by 
	\begin{itemize}
	\item $x \mapsto G\left(F\left(x\right)\right)$
	\item $f \mapsto G\left(F\left(f\right)\right)$
	\end{itemize}
	\end{exmp}
	
	\begin{exmp}
	\label{exmp: product as functor}
	The product category $\scat{A} \times \scat{B}$ can be thought of as an element of the image of (the object function of) a functor $\times\colon\ \mathbf{Cat}\times\mathbf{Cat}\to\mathbf{Cat}$, where $\mathbf{Cat}$ is the category whose objects are (small) categories and whose morphisms are functors between them. We have associativity (up to isomorphism) when taking products, and we will occasionally refer to the product on a finite family of categories $\left\{\scat{A}_i\right\}$ by $\prod_{i} \scat{A}_i$.
	\end{exmp}
	
	\begin{exmp}
	\label {exmp: coproduct as functor}
	The coproduct category $\scat{A} \coprod \scat{B}$ can be thought of as the object image of a functor $\coprod\colon \mathbf{Cat}\times\mathbf{Cat}\to\mathbf{Cat}$. As in the case of products, we have associativity (up to isomorphism) when taking coproducts, and we will use $\coprod_{i} \scat{A}_i$ to denote the coproduct on a finite family of categories $\left\{\scat{A}_i\right\}$. 
	\end{exmp}
	In Examples \ref{exmp: product as functor} and \ref{exmp: coproduct as functor}, we are really seeing a special case of more general constructions of product and coproduct, which return a unique (up to isomorphism) object for two objects of a category (when such an object exists). These constructions are, in turn, special cases of limits and colimits. For a more rigorous and thorough treatment of these concepts, we refer the reader to \cite{MACCAT}.

	\begin{definition}
	A \emph{natural isomorphism} $\eta$ is a collection of isomorphisms in $\Hom\left(\scat{B}\right)$ that essentially tells us that two functors are the same ($F \cong G$). More formally, two functors $F\colon\scat{A}\to\scat{B}$ and $G\colon\scat{A}\to\scat{B}$ are naturally isomorphic if, for any two objects $x, y$ in $\scat{A}$, we have morphisms $\eta_{x}\colon F\left(x\right) \to G\left(x\right)$ and $\eta_{y}\colon F\left(y\right) \to G\left(y\right)$ such that, for any morphism $f\colon x \to y$, the following diagram commutes:\\
	\begin{center}
	$\xymatrix@=5em{& F\left(x\right)\ar[d]_{F\left(f\right)} \ar[r]^{\eta_{x}} & G\left(x\right) \ar[d]^{G\left(f\right)} & \\ & F\left(y\right) \ar[r]^{\eta_{y}} & G\left(y\right)}$
	\end{center}
	i.e. $\eta_{y} \circ F\left(f\right) = G\left(f\right) \circ \eta_{x}$ for all $f\colon x \to y$. 
	\end{definition}
	
	If we require the weaker condition that $\eta$ is a collection of morphisms (not necessarily isomorphisms) and keep everything else the same, we say $\eta$ is a \emph{natural transformation}. This gives us another example of a category, the functor category.

	\begin{exmp}
	\label{exmp: functor category}
	Let $\scat{A}$ be a small category. Then $[\scat{A},\scat{B}]$ is a category whose objects are functors from $\scat{A}$ to $\scat{B}$ and whose morphisms are natural transformations between functors. 
	\end{exmp}

	We can now formalize our notion of equivalence for categories. 
	\begin{definition}
	Two categories $\scat{A}$ and $\scat{B}$ are \emph{equivalent} if there exist functors $F\colon\scat{A}\to\scat{B}$ and $G\colon\scat{B}\to\scat{A}$ such that $G \circ F \cong Id_{\scat{A}}$ and $F \circ G \cong Id_{\scat{B}}$. 
	\end{definition}
	Alternatively, if both categories $\scat{A}$ and $\scat{B}$ are small, we can say that $\scat{A}$ and $\scat{B}$ are equivalent if $G \circ F$ and $Id_{\scat{A}}$ are isomorphic objects in $[\scat{A},\scat{A}]$ and $F \circ G$ and $Id_{\scat{B}}$ are isomorphic objects in $[\scat{B},\scat{B}]$. When $\scat{A}$ and $\scat{B}$ are equivalent through $F\colon\scat{A}\to\scat{B}$ and $G\colon\scat{B}\to\scat{A}$, we shall also write that \kern-3em $\xymatrix{&\scat{A} \ar@<.5ex>[r]^{F} &\scat{B} \ar@<.5ex>[l]^{G}}$ is an equivalence. 
	
	\subsection{Bicategory Theory}
	In what follows, we describe small bicategories. See \cite{LEIBICAT} for a more detailed, but still brief, exposition. Much of what follows is nearly identical to Leinster's survey. We continue working in the ``small'' setting. 
	\begin{definition}
	A \emph{bicategory} $\scat{A}$ consists of the following data: 
	\begin{enumerate}
	\item A set of objects $\ob\left( \scat{A}\right)$. Elements in $\ob\left( \scat{A}\right)$ are sometimes called \emph{0-cells}. 
	\item For any $\left(x,y\right) \in \ob\left( \scat{A}\right) \times \ob\left( \scat{A}\right)$, a small category $\scat{A}\left(x,y\right)$ which consists of
	\begin{itemize}
	\item Objects called \emph{1-cells}
	\item Arrows between 1-cells which are called \emph{2-cells}. The identity 2-cell for a 1-cell $f$ will be denoted $\widetilde{1_{f}}$. 
	\item A composition rule $\mycirc{v}_{xy}$. 
	\end{itemize}
	\item For any triple $\left(x,y,z\right)$ of objects in $\ob\left( \scat{A}\right)$, a functor $\mycirc{h}_{xyz}\colon \scat{A}\left(y,z\right) \times \scat{A}\left(x,y\right) \to \scat{A}\left(x,z\right)$ given by
	\begin{itemize}
	\item $\left(g, f\right) \mapsto g \circ f = gf$
	\item $\left(\beta, \alpha\right) \mapsto \beta \ \mycirc{h} \ \alpha$
	\end{itemize}
	\item For any object $x \in \ob\left( \scat{A}\right)$, a functor $I_{x} \colon \textbf{1} \to \scat{A}\left(x,x\right)$ which identifies a special, ``almost identity'' 1-cell, $1_x$.  
	\item Additionally, we have some natural isomorphisms on certain products of functors which tell us how our functors $\mycirc{h}_{xyz}$ work with ``almost identities'' and associativity. For brevity, we omit the diagrams (which may be found in \cite{LEIBICAT}), and only state the important consequences of them:
	\begin{enumerate}
	\item For 1-cells $f, g, h$ such that $\left(hg\right)f$ and hence $h\left(gf\right)$ are well-defined, we have a 2-cell isomorphism $A_{hgf} \colon \left(hg\right)f \to h\left(gf\right)$. This 2-cell isomorphism (or \emph{2-isomorphism}) is called the \emph{associator of hgf}. 
	\item For a 1-cell $f \in \ob\left( \scat{A}\right)\left(x,y\right)$, we have a 2-isomorphism $r_{f} \colon f \circ 1_x \to f$. This 2-isomorphism is called the \emph{right unitor of f}.
	\item For a 1-cell $g \in \ob\left( \scat{A}\right)\left(y,x\right)$, we have a 2-isomorphism $\ell_{f} \colon 1_x \circ f \to f$. This 2-isomorphism is called the \emph{left unitor of f}. 
	\end{enumerate}
	\end{enumerate}
	\end{definition}
	When it is clear from context, we just refer to these 2-isomorphisms as associators and right or left unitors without specifying which 1-cells they are related to. If all of these 2-isomorphisms are exact identity 2-cells, then our bicategory is called \emph{strict}. Otherwise, our bicategory is called \emph{weak}. There is a good way of coming up with strict 2-categories. Namely, for all $x, y \in \scat{A}$ we can replace $\Hom\left(x,y\right)$ with an object $\scat{A}\left(x,y\right)$ from a special sort of category $M$ known as a monoidal category. (We say we have enriched our category over M, and call $\scat{A}\left(x,y\right)$ the Hom-object from x to y.) The category of all small categories is a monoidal category, so we can enrich a category over $\mathbf{Cat}$ to get a strict 2-category. When we generalize the notion of bicategory to weak n-category, we will again have strict counterparts that we can produce through enrichment: a strict n-category will be a category enriched over the category of strict (n-1)-categories.\footnote{The category $\mathbf{Set}$ is also a monoidal category, and a category enriched over $\mathbf{Set}$ is just a usual category. In fact, we may begin our inductive process of strict n-categories by considering $\mathbf{Set}$ to be our strict 0-category.}
	
	\begin{exmp}
	\label{defn: Cat as bicat}
	We can think of $\mathbf{Cat}$ as a bicategory which has all small categories as its objects, functors between the categories as its 1-cells, and natural transformations between functors as its 2-cells. This is often said to be the prototypical example of a bicategory. (In fact, it is a strict 2-category.)
	\end{exmp}
	
	\begin{definition}
	Two 0-cells $x$ and $y$ of a bicategory are \emph{internally equivalent} if there exists $f \in \scat{A}\left(x,y\right)$ and $g \in \scat{A}\left(y,x\right)$ such that 
	\begin{enumerate}
	\item There exists a 2-isomorphism $\alpha \colon g \circ f \to 1_x$
	\item There exists a 2-isomorphism $\beta \colon f \circ g \to 1_y$.
	\end{enumerate}
	\end{definition}
	This definition may be thought of as a weakening of the conditions of isomorphic objects in a category. In some sense, the motivation is that we now have ``higher levels'' of equivalence that we can check. Indeed, we may think of the notion of isomorphic objects as a weakening of the equality of elements in a set. 

	In addition to a notion of equivalence of objects in a bicategory, we have a notion of equivalence of bicategories themselves. Recall that an equivalence of categories involved two functors and two natural transformations between the two compositions of these functors and the identity functors for the categories. Equivalence for bicategories will work out in roughly the same way. The analogue of a functor in bicategory theory is, somewhat unfortunately, called a morphism. 
	
	\begin{definition}
	\label{defn: morphism between bicats}
	A \emph{morphism} $F\colon\scat{A}\to\scat{B}$ between bicategories $\scat{A}$ and $\scat{B}$ consists of the following data:
	\begin{enumerate}
	\item An object function $F_{0}\colon \ob\left( \scat{A}\right) \to \ob\left( \scat{B}\right)$
	\item For each pair $\left(x,y\right) \in \ob\left( \scat{A}\right) \times \ob\left( \scat{A}\right)$, a functor $F_{xy}\colon\scat{A}\left(x,y\right) \to \scat{B}\left(F_{0}\left(x\right),F_{0}\left(y\right)\right)$.
	\end{enumerate}
	and respects ``composition'' in the following sense
	\begin{enumerate}
	\item There is a 2-cell $\phi_{gf}$ in $\scat{A}\left(F_{0}\left(x\right),F_{0}\left(z\right)\right)$ between $F_{yz}\left(g\right) \circ F_{xy}\left(f\right)$ and $F_{xz}\left(g \circ f \right)$. 
	\item There is a 2-cell $\phi_{x}$ in $\scat{A}\left(F_{0}\left(x\right),F_{0}\left(x\right)\right)$ between  $I^{\prime}_{F_{0}\left(x\right)}$, the almost-identity 1-cell in $F_{0}\left(x\right)$, and $F_{0}\left(I_{x}\right)$.
	\end{enumerate}
	
	\end{definition}
	In addition to the data above, morphisms also satisfy some additional diagrams that can be found in \cite{LEIBICAT}. As in the case of functors, we will generally omit the subscripts from the functions and functors that make up a morphism of bicategories because it will be clear from context; additionally, every bicategory $\scat{A}$ has an identity morphism $Id_{\scat{A}}$ which has identity object function and every functor being the appropriate identity functor. Notice that the 2-cells which relate composition under a morphism of bicategories are not very ``strong.'' They are not strict equalities, and they need not even be isomorphisms. If all of them \emph{are} isomorphisms, we follow Leinster and call the morphism a \emph{homomorphism}.  We will use homomorphisms in our definition of biequivalence. We also need to talk about transformations and modifications.
	
	\begin{definition}
	\label{defn: transformations of morphisms}
	A \emph{transformation} $\sigma$ between morphisms $F\colon\scat{A}\to\scat{B}$ and $G\colon\scat{A}\to\scat{B}$ is the analogue to a natural transformation between functors. Specifically, it gives
	\begin{enumerate}
	\item A 1-cell $\sigma_{x}$ between $Fx$ and $Gx$ for each $x \in \ob\left( \scat{A}\right)$. 
	\item A 2-cell $\sigma_{f}$ between $Gf \circ \sigma_{x}$ and $\sigma_{y} \circ Ff$ for all 1-cells $f\colon x \to y$. 
	\end{enumerate}
	\end{definition}
	
	The analogy between a transformation $\sigma$ of morphisms $F^{\prime}$ and $G^{\prime}$ and a natural transformation $\eta$ of functors $F$ and $G$ is worth remarking on. Notice that the assignment of the 1-cell $\sigma_{x^{\prime}}$ to a 0-cell $x^{\prime}$ of a bicategory is essentially identical to the process by which $\eta_{x}$, a morphism, is assigned to an object $x$ of a category. Recall also that, in a natural transformation, we have commutative diagrams which expresses that $Gf \circ \eta_{x} = \eta_{y} \circ Ff$ for each $f\colon x \to y$. In our transformation of morphisms, this notion is weakened: all we say is that there is a 2-cell $\sigma_{f^{\prime}}$ between $G^{\prime}f^{\prime} \circ \sigma_{x^{\prime}}$ and $\sigma_{y^{\prime}} \circ F^{\prime}f^{\prime}$ for each $f^{\prime} \colon x^{\prime} \to y^{\prime}$. If $\sigma_{f^{\prime}}$ is the identity for each $f^{\prime}$ then we have essentially recovered the idea of natural transformation, but we still need to keep in mind that this is a transformation between morphisms, not functors. At any rate, we will call such a transformation a \emph{strict transformation}. In between transformations and strict transformations, there are \emph{strong transformations}, which are transformations for which each $\sigma_{f^{\prime}}$ is an isomorphism. It is the strong transformations that we will use to discuss equivalence of bicategories. We need an additional piece of the puzzle called a modification.
	
	\begin{definition}
	\label{defn: modifications between transformations}
	A modification $\Gamma$ between transformations $\sigma$ and $\sigma^{\prime}$ (from a morphism $F\colon \scat{A} \to \scat{B}$ to a morphism $G \colon \scat{A} \to \scat{B}$) is a collection of 2-cells $\Gamma_{x}\colon \sigma_{x} \Rightarrow \sigma^{\prime}_{x}$ for each $x \in \ob\left( \scat{A}\right)$. Additionally, for any $f\colon x \to y$, the following diagram commutes:
	\begin{center}
	$\xymatrix@=5em{& Gf \circ \sigma_{x}\ar[d]_{\sigma_{f}} \ar[r]^{1 \mycirc{h} \Gamma_{x}} & Gf \circ \sigma^{\prime}_{x} \ar[d]^{\sigma^{\prime}f} & \\ & \sigma_{y} \circ Ff \ar[r]^{\Gamma_{y} \mycirc{h} 1} & \sigma^{\prime}y \circ Ff}$
	\end{center}
	\end{definition}
	
	This should look similar to natural transformations between functors, almost as though it were a natural transformation of natural transformations; indeed, why not have a similar way of tracking what the natural transformations (now generalized to transformations in the higher category setting) look like relative to one another? Modifications allow us to do this: we move up one extra level of comparison. Just as we had a notion of equivalence of categories come out of natural isomorphisms, we would like a ``next-level'' equivalence of bicategories to come out of \emph{strong modifications}, which are modifications for which each $\Gamma_{x}$ is an isomorphism. \par
	Recall that our alternative definition of equivalence of (small) categories $\scat{A}$ and $\scat{B}$ was just a statement about the existence of certain isomorphisms in the categories $[\scat{A},\scat{A}]$ and $[\scat{B},\scat{B}]$ (as defined in Example \ref{exmp: functor category}). We would like to do something similar for equivalence of bicategories, but now we will want a statement about the existence of certain \emph{internal equivalences}. But what bicategory should these internal equivalences be in? The answer is the following bicategory, which is another ``prototypical'' example of a bicategory. 
	
	\begin{exmp}
	\label{exmp: homomorphism bicat} 
	For (small) bicategories $\scat{A}$ and $\scat{B}$, we have a bicategory $[\scat{A},\scat{B}]$ whose objects are homomorphisms, 1-cells are strong transformations, and 2-cells are strong modifications. 
	\end{exmp}
	
	It is now relatively clear what an equivalence of bicategories should look like, at least for small bicategories, using Example \ref{exmp: homomorphism bicat}.
	
	\begin{definition}
	Two (small) bicategories $\scat{A}$ and $\scat{B}$ are equivalent if there exist homomorphisms $F\colon\scat{A}\to\scat{B}$ and $G\colon\scat{B}\to\scat{A}$ such that $G \circ F$ is internally equivalent to $Id_{\scat{A}}$ in $[\scat{A},\scat{A}]$ and $F \circ G$ is internally equivalent to $Id_{\scat{B}}$ in $[\scat{B},\scat{B}]$.
	\end{definition}
	
	It is not currently clear what the best way to generalize to the notion of weak n-category is. Several competing definitions have been put forward, and it is not yet known if they are all equivalent. (For a presentation of ten such definitions, see \cite{LEISURVEY}.) For our purposes, we will consider an n-category to be something with 0-cells, 1-cells, 2-cells, . . ., n-cells and which has a notion of internal equivalence involving n-isomorphisms. 
	
	\section{The Euler characteristic for n-categories}
	\subsection{The Construction of the Euler Characteristic for 1-categories}
	We now define the Euler characteristic of a finite 1-category as introduced by Leinster, present several example calculations, and review some results found of Leinster's. 
	\begin{definition}
	The \emph{adjacency matrix} of a finite category $\scat{A}$ under some total ordering $\left\{a_1, a_2, . . ., a_n\right\}$ is \[\emph{M}\left(\scat{A}\right) = \begin{bmatrix}
    |\Hom\left(a_1,a_1\right)|       & \dots & |\Hom\left(a_1,a_n\right)| \\
    \vdots & \ddots & \vdots \\
    |\Hom\left(a_n,a_1\right)|      & \dots & |\Hom\left(a_n,a_n\right)|
\end{bmatrix}. \]
	\end{definition}
	
	\begin{definition}
	\label{defn: weighting and coweighting}
	A \emph{weighting} on $\scat{A}$ is a solution $\mathbf{v} = \begin{bmatrix} k^{a_{1}} & k^{a_{2}} & \dots{} & k^{a_{n}} \end{bmatrix}^{\intercal}$ to the matrix equation $\emph{M}\left(\scat{A}\right)\mathbf{v} =  \begin{bmatrix} 1 & 1 & \dots & 1 \end{bmatrix}^{\intercal}$. A \emph{coweighting} on $\scat{A}$ is a solution $\mathbf{u} = \begin{bmatrix} k_{a_{1}} & k_{a_{2}} & \dots & k_{a_{n}} \end{bmatrix}$ to the matrix equation $\mathbf{u}\emph{M}\left(\scat{A}\right) =  \begin{bmatrix} 1 & 1 & \dots & 1 \end{bmatrix}$. Alternatively, a \emph{weighting} on $\scat{A}$ is a function $k^{\bullet}\colon \ob\left( \scat{A}\right) \to \mathbb{Q}$ such that $\sum_{b \in \ob\left( \scat{A}\right)} |\Hom\left(a,b\right)|k^{b} = 1$ for all $a \in \ob\left( \scat{A}\right)$ and a coweighting on $\scat{A}$ $k_{\bullet}$ is a function $k_{\bullet}\colon\ob\left( \scat{A}\right) \to \mathbb{Q}$ such that $\sum_{a \in \ob\left( \scat{A}\right)} |\Hom\left(a,b\right)|k_{a} = 1$ for all $b \in \ob\left( \scat{A}\right)$..
	\end{definition}
	
	Note that a coweighting on $\scat{A}$ can also be thought of as a weighting on $\scat{A}^{op}$, and a weighting on $\scat{A}$ can be thought of as a coweighting on $\scat{A}^{op}$. Of course, a category $\scat{A}$ need not have a weighting or coweighting, but if it has both then $\sum_{i=1}^{n} k^{a_{i}} = \sum_{i=1}^{n} k_{a_{i}}$, which gives us a definition for Euler characteristic of a finite category.
	
	\begin{definition}
	If a category $\scat{A}$ admits both a weighting $k^{\bullet}$ and coweighting $k_{\bullet}$ then its \emph{Euler characteristic} is $\EC{\scat{A}} =  \sum_{i=1}^{n} k^{a_{i}} = \sum_{i=1}^{n} k_{a_{i}}$. 
	\end{definition}
	
	Note that, as a convention, we shall define the Euler characteristic of the empty category to be zero.
	
	\begin{exmp}
	\label{calculation: terminal or initial}
	In Example \ref{exmp: terminal or initial}, our matrix is $\begin{bmatrix} 1 & 1 \\ 0 & 1 \end{bmatrix}$, which has weighting $\textbf{x} = \begin{bmatrix} 0 \\ 1 \end{bmatrix}$ and coweighting $\textbf{y} = \begin{bmatrix} 1 & 0 \end{bmatrix}$. Therefore, the Euler characteristic is $1$. 
	\end{exmp}
	
	\begin{exmp}
	\label{calculation: three object}
	In Example \ref{exmp: three object}, the adjacency matrix is $\begin{bmatrix} 1 & 1 & 1 \\ 1 & 1 & 1 \\ 0 & 0 & 1 \end{bmatrix}$. This has weighting $\begin{bmatrix} 0 \\ 0 \\ 1 \end{bmatrix}$ and coweighting $\begin{bmatrix} 1 & 0 & 0 \end{bmatrix}$. (In fact, it has infinitely many weightings and coweightings.) The Euler characteristic is 1. \par
	\end{exmp}
	
	In light of the last two examples, it is worth noting that how we order the objects in our category does not really matter for the purposes of these matrix equations--and thus also for the Euler characteristic. Additionally, in both examples we had initial objects and terminal objects. In Example \ref{calculation: terminal or initial}, the initial object was $x$, and the terminal object was $y$. In Example \ref{calculation: three object}, the initial objects were $x$ and $y$, and the terminal object was $z$. An initial object corresponds to a row of ones in the adjacency matrix, and a terminal object corresponds to a column of ones. In either case, this forces the sum of weightings and sum of coweightings to be 1. Thus the Euler characteristic of a category with an initial object or terminal object is $1$. 
	
	\begin{exmp}[\cite{LEICAT} Ex. 2.3a]
	\label{calculation: set as category}
	A set $S$ with $n$ elements can be viewed as a category $\scat{S}$ with $n$ objects and only identity morphisms. Its adjacency matrix is thus the $n \times n$ identity matrix, so its Euler characteristic is $ \EC{\scat{S}} = |\ob\left(\scat{S}\right)| = |S| = n$. 
	\end{exmp}
	
	\begin{exmp}[\cite{LEICAT} Ex. 2.3b]
	\label{calculation: monoid as category}
	Since a finite monoid $M$ is considered as a category $\scat{M}$ with only one object, the adjacency matrix of $\scat{M}$ is $1 \times 1$. Thus we have Euler characteristic given by the solution $x$ to $|\Hom\left(\scat{M}\right)| \cdot x = x \cdot |\Hom\left(\scat{M}\right)| = 1$. That is, the Euler characteristic of a monoid is $\EC{\scat{M}} = \frac{1}{|\Hom\left(\scat{M}\right)|}$.
	\end{exmp}
	
	\begin{exmp}
	\label{calculation: group as category}
	Note that, while we might check that our adjacency matrix describes an actual category (and not just a general directed multigraph with loops), the adjacency matrix does not really capture any of the behavior of the composition of morphisms. This general statement is illustrated here by considering that a finite group considered as a category $\scat{G}$ has $\EC{\scat{G}} = \frac{1}{|\Hom\left(\scat{G}\right)|}$, just as a finite monoid considered as a category $\scat{M}$ had $\EC{\scat{M}} = \frac{1}{|\Hom\left(\scat{M}\right)|}$.
	\end{exmp}
	We may view a finite category as a special kind of directed multigraph with loops. (Henceforth, a directed multigraph with loops will be called a \emph{quiver}.) Viewing a finite category as a special kind of quiver makes certain notions rather natural. For instance, we can talk about the connected components of a finite category $\scat{A}$. If each of these connected components has Euler characteristic, then the Euler characteristic of $\scat{A}$ is the sum of the Euler characteristics of the connected components. This is expressed in a more category-theoretic way by \cite[Proposition 2.6]{LEICAT}, which we state in the next two theorems.
	
	\begin{thm}
	Let $\left\{\scat{A}_i\right\}$ be a finite family of finite categories with Euler characteristic. Then $\coprod_{i} \scat{A}_i$ has Euler characteristic and $\EC{\coprod_{i} \scat{A}_i} = \sum_{i} \EC{\scat{A}_i}$. 
	\end{thm}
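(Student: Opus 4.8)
The plan is to exploit the block structure that the coproduct imposes on the adjacency matrix. By the definition of the coproduct category (Example \ref{exmp: coproduct}), whenever $x$ and $y$ lie in different summands $\scat{A}_i$ and $\scat{A}_j$ with $i \neq j$, the set $\Hom\left(x,y\right)$ is empty, so $|\Hom\left(x,y\right)| = 0$. Since reordering the objects of a finite category affects neither the existence nor the values of weightings and coweightings (as remarked after Example \ref{calculation: three object}), I would first choose a total ordering of $\ob\left(\coprod_i \scat{A}_i\right)$ that lists all objects of $\scat{A}_1$ first, then all objects of $\scat{A}_2$, and so on. With respect to this ordering the adjacency matrix $\mathrm{M}\left(\coprod_i \scat{A}_i\right)$ is block diagonal, with $i$-th diagonal block equal to $\mathrm{M}\left(\scat{A}_i\right)$ and all off-diagonal blocks zero.

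First I would construct a weighting on the coproduct by gluing together the weightings of the summands. Since each $\scat{A}_i$ has an Euler characteristic, it admits a weighting $\mathbf{v}_i$ solving $\mathrm{M}\left(\scat{A}_i\right)\mathbf{v}_i = \mathbf{1}$, where $\mathbf{1}$ denotes the all-ones column vector of the appropriate length. Let $\mathbf{v}$ be the column vector obtained by stacking the $\mathbf{v}_i$ in the order fixed above. Because $\mathrm{M}\left(\coprod_i \scat{A}_i\right)$ is block diagonal, the product $\mathrm{M}\left(\coprod_i \scat{A}_i\right)\mathbf{v}$ decomposes blockwise into the vectors $\mathrm{M}\left(\scat{A}_i\right)\mathbf{v}_i = \mathbf{1}$, so $\mathbf{v}$ is a weighting on the coproduct. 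An identical argument applied to the given coweightings $\mathbf{u}_i$, stacked as a single row vector $\mathbf{u}$, shows that $\mathbf{u}$ is a coweighting on $\coprod_i \scat{A}_i$.

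Having produced both a weighting and a coweighting, the coproduct has an Euler characteristic by definition, and its value is the common sum of the entries of $\mathbf{v}$ (equivalently, of $\mathbf{u}$). Since the entries of $\mathbf{v}$ are exactly the entries of the $\mathbf{v}_i$ partitioned by summand, the total sum is $\sum_i \left( \sum_{a \in \ob\left(\scat{A}_i\right)} k^{a} \right) = \sum_i \EC{\scat{A}_i}$, which is the claimed formula. The finiteness of the family ensures this is a finite sum, and the finiteness of each $\scat{A}_i$ ensures every adjacency block, and hence the whole matrix, has finite size; these hypotheses serve only to keep everything well-defined. Honestly, there is no serious obstacle here: the one point demanding care is the bookkeeping that the block-diagonal structure decouples the linear systems completely, so that a solution for $\coprod_i \scat{A}_i$ is the same datum as a compatible tuple of solutions for the summands.
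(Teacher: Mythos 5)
Your proof is correct and is essentially the argument the paper relies on: the paper states this theorem without proof, deferring to Leinster's Proposition 2.6 in \cite{LEICAT}, and the block-diagonal decomposition of the adjacency matrix with stacked weightings and coweightings is exactly that standard argument. No gaps to report.
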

	
	In fact, the preceding theorem says more than what we have asserted in our comments leading up to it: not only can we calculate Euler characteristic of an existing category by splitting it up into calculations on connected components, but we can also group up disjoint finite categories with Euler characteristic into a new category and then calculate the new category's Euler characteristic from the Euler characteristics of its disjoint pieces. 
	
	Additionally, we have the Euler characteristic well-behaved under product as well, according to the following theorem of Leinster:
	
	\begin{thm}
	Let $\left\{\scat{A}_i\right\}$ be a finite family of finite categories with Euler characteristic. Then $\prod_{i} \scat{A}_i$ has Euler characteristic and $\EC{\prod_{i} \scat{A}_i} = \prod_{i} \EC{\scat{A}_i}$. 
	\end{thm}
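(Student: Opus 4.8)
The plan is to produce explicit weightings and coweightings on the product and read off the multiplicativity of the Euler characteristic from them. Everything rests on one arithmetic observation about the product category (Example \ref{exmp: product}): for objects $\left(a_1,\dots,a_n\right)$ and $\left(b_1,\dots,b_n\right)$ of $\prod_i \scat{A}_i$, a morphism between them is exactly a tuple of morphisms, one in each factor, so
\[
\left|\Hom\left(\left(a_1,\dots,a_n\right),\left(b_1,\dots,b_n\right)\right)\right| = \prod_{i} \left|\Hom\left(a_i,b_i\right)\right|.
\]
In matrix language this says that, under the lexicographic ordering of object tuples, the adjacency matrix of the product is the Kronecker product of the adjacency matrices $M\left(\scat{A}_i\right)$; the reason the final Euler characteristic will factor is precisely that Kronecker products of solutions to the weighting equations remain solutions, and that the sum of the entries of a Kronecker product of vectors is the product of the sums.

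Next I would work directly with the full $n$-fold product rather than building it by iterated binary products, so as to avoid any appeal to the merely up-to-isomorphism associativity of $\times$. Choose a weighting $k_i^{\bullet}$ and a coweighting $\left(k_i\right)_{\bullet}$ on each $\scat{A}_i$, which exist since each factor has Euler characteristic. Define a candidate weighting on $\prod_i \scat{A}_i$ by $m^{\left(a_1,\dots,a_n\right)} = \prod_i k_i^{a_i}$, and a candidate coweighting analogously from the $\left(k_i\right)_{\bullet}$. Using the multiplicativity of hom-cardinalities above, the defining sum for a weighting factors across the index set:
\[
\sum_{\left(b_1,\dots,b_n\right)} \left|\Hom\left(\left(a_1,\dots,a_n\right),\left(b_1,\dots,b_n\right)\right)\right|\, m^{\left(b_1,\dots,b_n\right)} = \prod_{i}\left(\sum_{b_i \in \ob\left(\scat{A}_i\right)} \left|\Hom\left(a_i,b_i\right)\right| k_i^{b_i}\right) = \prod_i 1 = 1,
\]
and the identical computation on the transposed equation shows the candidate coweighting is genuine. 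Since $\prod_i \scat{A}_i$ is finite (its morphism count is $\prod_i \left|\Hom\left(\scat{A}_i\right)\right|$), the existence of both a weighting and a coweighting guarantees, by the discussion preceding the definition of Euler characteristic, that $\prod_i \scat{A}_i$ has an Euler characteristic.

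Finally I would compute it by summing the weighting I constructed and factoring the sum exactly as above:
\[
\EC{\prod_i \scat{A}_i} = \sum_{\left(a_1,\dots,a_n\right)} m^{\left(a_1,\dots,a_n\right)} = \prod_i\left(\sum_{a_i \in \ob\left(\scat{A}_i\right)} k_i^{a_i}\right) = \prod_i \EC{\scat{A}_i},
\]
the last equality because each inner sum is the Euler characteristic of $\scat{A}_i$. I do not expect a genuine obstacle here: the only substantive input is the factorization of hom-set cardinalities, and the rest is bookkeeping. The one point I would be careful about is precisely the temptation to induct on the family size through binary products, where associativity holds only up to isomorphism; handling the $n$-fold product in a single step via the product-of-weightings formula sidesteps that entirely. (For the degenerate empty family, the product is the terminal category $\mathbf{1}$ of Example \ref{exmp: 1 category}, whose Euler characteristic is $1$, matching the empty product, so the statement holds vacuously there as well.)
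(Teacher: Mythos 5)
Your proof is correct. The paper itself states this theorem without proof, citing Leinster, and only remarks that the adjacency matrix of the product is the Kronecker product of the factors' adjacency matrices; your argument --- defining the weighting on $\prod_i \scat{A}_i$ as the product of weightings $m^{\left(a_1,\dots,a_n\right)} = \prod_i k_i^{a_i}$, verifying the weighting equation via the factorization $\left|\Hom\left(\left(a_1,\dots,a_n\right),\left(b_1,\dots,b_n\right)\right)\right| = \prod_i \left|\Hom\left(a_i,b_i\right)\right|$, and then factoring the sum --- is exactly the standard proof that this remark encodes, and it fills the gap cleanly (including the sensible choice to treat the $n$-fold product directly rather than inducting through binary products).
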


	This notion is easily understood in category-theoretic terms. In terms of quivers, we might think of this in terms of a tensor product of multigraphs as defined in \cite{WEIKRON}: the edges of the multigraph are still given by ordered pairs, but now the adjacency matrix of the multigraph is given by the Kronecker product of the adjacency matrices which make it up. 
	
	\par
	But why is the quantity we have been discussing known as the Euler characteristic? To justify such an expression, it ought to satisfy at least two properties:
	\begin{enumerate}
	\setlength{\itemindent}{.5in}
	\item[\textbf{P1}] Agreement with previous definitions of Euler characteristic
	\item[\textbf{P2}] Invariance under equivalence
	\end{enumerate}
	It turns out that this is, in fact, the case: both properties are satisfied. When we move to higher categories, our goal will be to satisfy these two properties again. Because our extension of Euler characteristic to higher categories is analogous to Leinster's proof that \textbf{P2} is satisfied, we now state his theorem about the Euler characteristic of a finite category being invariant under (categorical) equivalence. We give a proof which is a slight modification of Leinster's original. Several ingredients of Leinster's original proof have more prominent roles here, so we state them as lemmata. 
	
	\begin{lem}
	\label{thm: isomorphic objects have bijective hom sets}
	If $x$ and $y$ are isomorphic objects in a category $\scat{A}$ then $|\Hom\left(x,z\right)| = |\Hom\left(y,z\right)|$ for all $z \in \ob\left( \scat{A}\right)$. Similarly, $|\Hom\left(w,x\right)| = |\Hom\left(w,y\right)|$ for all $w \in \ob\left( \scat{A}\right)$. 
	\end{lem}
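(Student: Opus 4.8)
The plan is to prove each statement by constructing an explicit bijection between the relevant Hom-sets, using the isomorphism data directly. Since $x$ and $y$ are isomorphic, fix morphisms $f \in \Hom(x,y)$ and $g \in \Hom(y,x)$ with $g \circ f = 1_x$ and $f \circ g = 1_y$; these are the only ingredients the argument needs, and the verification will rely solely on the associativity and identity axioms for $\scat{A}$.

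For the first claim, fix $z \in \ob(\scat{A})$. I would define a map $\Phi \colon \Hom(x,z) \to \Hom(y,z)$ by precomposition with $g$, namely $\Phi(h) = h \circ g$ (which makes sense since $g \colon y \to x$ and $h \colon x \to z$), together with a candidate inverse $\Psi \colon \Hom(y,z) \to \Hom(x,z)$ given by $\Psi(k) = k \circ f$. The key computation is that $\Psi \circ \Phi$ and $\Phi \circ \Psi$ are both identities: for any $h \in \Hom(x,z)$ one has $\Psi(\Phi(h)) = (h \circ g) \circ f = h \circ (g \circ f) = h \circ 1_x = h$, and symmetrically $\Phi(\Psi(k)) = (k \circ f) \circ g = k \circ (f \circ g) = k \circ 1_y = k$. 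Hence $\Phi$ is a bijection and $|\Hom(x,z)| = |\Hom(y,z)|$.

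The second claim is the formal dual, obtained by replacing precomposition with postcomposition. Fixing $w \in \ob(\scat{A})$, I would define $\Phi' \colon \Hom(w,x) \to \Hom(w,y)$ by $\Phi'(h) = f \circ h$ and $\Psi' \colon \Hom(w,y) \to \Hom(w,x)$ by $\Psi'(k) = g \circ k$, then run the analogous computation using $g \circ f = 1_x$ and $f \circ g = 1_y$ to conclude they are mutually inverse. This yields $|\Hom(w,x)| = |\Hom(w,y)|$.

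There is no serious obstacle here: the entire content is the construction of the four composition maps and the bookkeeping that they land in the correct Hom-sets. The only points requiring care are keeping the directions of $f$ and $g$ straight so that each composite is defined, and invoking associativity at the right moment to collapse $g \circ f$ and $f \circ g$ to identities. I would also remark that finiteness is not needed for the bijections themselves; it is only the \emph{cardinalities} that the later Euler-characteristic arguments will care about, and the bijections established here are exactly what make those cardinalities agree.
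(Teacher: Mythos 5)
Your proposal is correct and matches the paper's proof in essence: both define the map $\Hom(x,z)\to\Hom(y,z)$ by precomposition with $g$ and use $f$ to invert it, the only cosmetic difference being that you exhibit a two-sided inverse while the paper checks injectivity and surjectivity separately. The dual claim is handled the same way in both.
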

	\begin{proof}
	We will only prove the first claim, that $|\Hom\left(x,z\right)| = |\Hom\left(y,z\right)|$ for all $z \in \ob\left( \scat{A}\right)$. \\
	For isomorphic $x$ and $y$, we have $f\colon x \to y$ and $g \colon y \to x$ such that $f \circ g = 1_{y}$ and $g \circ f = 1_{x}$. \\
	We have a mapping $G\colon \Hom\left(x,z\right) \to \Hom\left(y,z\right)$ given by $G\left(\rho\right) = \rho \circ g$. \\
	Now, $G$ is surjective: for any $\pi \in \Hom\left(y,z\right)$, $\pi \circ f \in \Hom\left(x,z\right)$ will get mapped to $\pi$. \\
	Additionally, $G$ is injective: if $\rho \circ g = \pi \circ g$ then we have \[\rho = \rho \circ \left(g \circ f\right) = \left(\rho \circ g\right) \circ f = \left(\pi \circ g\right) \circ f = \pi \circ \left(g \circ f \right) = \pi.\] \\
	Since z was arbitrary, the claim is proven, as $G\colon \Hom\left(x,z\right) \to \Hom\left(y,z\right)$ is a bijection. The other claim follows similarly. 
	\end{proof}
	
	Another way of stating this lemma is that the function $|\Hom|\colon \ob\left( \scat{A}\right) \times \ob\left(\scat{A}\right) \to \mathbb{N}$ is constant on isomorphic classes of $\scat{A}$. This lemma will play an important role in our main theorem of invariance under equivalence and also helps facilitate our next lemma. 
	
	\begin{lem}
	\label{thm: constant weighting}
	If a category $\scat{A}$ has a weighting (resp. coweighting) $k^{\bullet}$  then we can find a weighting (resp. coweighting) $\alpha^{\bullet}$ that is constant on isomorphism classes.
	\end{lem}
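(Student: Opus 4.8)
The plan is to take the given weighting and replace it by its average over each isomorphism class, using Lemma~\ref{thm: isomorphic objects have bijective hom sets} to guarantee that the averaged function still solves the weighting equations. It suffices to treat the weighting case: a coweighting on $\scat{A}$ is the same as a weighting on $\scat{A}^{op}$, and $\scat{A}$ and $\scat{A}^{op}$ have exactly the same isomorphism classes, so the coweighting statement follows by applying the weighting statement to $\scat{A}^{op}$.

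First I would set up the relevant symmetry. Let $\{a_1,\dots,a_n\}$ be the objects and let $M = M\left(\scat{A}\right)$ be the adjacency matrix, so that a weighting is a solution $k^{\bullet}$ of $M k^{\bullet} = \mathbf{1}$, where $\mathbf{1}$ is the all-ones vector. Consider the group $G$ of all permutations $g$ of $\ob\left(\scat{A}\right)$ that send every object to an isomorphic object; equivalently $G = \prod_{C} \mathrm{Sym}(C)$, the product of the symmetric groups on the isomorphism classes $C$ of $\scat{A}$. By Lemma~\ref{thm: isomorphic objects have bijective hom sets}, $|\Hom(a,b)|$ depends only on the isomorphism classes of $a$ and of $b$, so $|\Hom(g(a),g(b))| = |\Hom(a,b)|$ for every $g \in G$. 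In matrix language, if $P_g$ denotes the permutation matrix of $g$, this says $P_g^{-1} M P_g = M$, i.e. $M$ commutes with $P_g$.

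Next I would check that $G$ preserves weightings. Writing $(g \cdot k)^{a} = k^{g^{-1}(a)}$, so that $g \cdot k$ corresponds to $P_g k^{\bullet}$, I compute $M\left(P_g k^{\bullet}\right) = P_g\left(M k^{\bullet}\right) = P_g \mathbf{1} = \mathbf{1}$, using that $M$ commutes with $P_g$ and that a permutation matrix fixes $\mathbf{1}$. Hence $g \cdot k$ is again a weighting for every $g \in G$. I would then average: set $\alpha^{\bullet} = \frac{1}{|G|} \sum_{g \in G} g \cdot k$. Since the solution set of $M v = \mathbf{1}$ is closed under affine (in particular convex) combinations, $\alpha^{\bullet}$ is again a weighting. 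By construction $\alpha^{\bullet}$ is $G$-invariant, and $G$ acts transitively on each isomorphism class, so $\alpha^{\bullet}$ is constant on isomorphism classes; explicitly one finds $\alpha^{a} = \frac{1}{|[a]|} \sum_{b \cong a} k^{b}$, where $[a]$ is the isomorphism class of $a$.

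The only genuine obstacle is the first step: the entire averaging argument works precisely because the adjacency matrix is invariant under relabeling objects within isomorphism classes, and this invariance is exactly the content of Lemma~\ref{thm: isomorphic objects have bijective hom sets}. Once that symmetry is established, the remainder is a routine group-averaging (Reynolds operator) argument, and no further composition data of $\scat{A}$ is needed.
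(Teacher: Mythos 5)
Your proposal is correct and is in essence the paper's own argument: both define $\alpha^{a}$ as the average of $k^{\bullet}$ over the isomorphism class of $a$ and both rest on Lemma~\ref{thm: isomorphic objects have bijective hom sets} to conclude that this average is still a weighting. The only difference is presentational — the paper verifies the weighting equations by directly regrouping the sum $\sum_{y}|\Hom\left(x,y\right)|\alpha^{y}$ over isomorphism classes, whereas you package the same verification as invariance of the adjacency matrix under class-preserving permutations followed by a group average, arriving at the identical formula $\alpha^{a} = \frac{1}{C_a}\sum_{b \cong a} k^{b}$ — and your reduction of the coweighting case to a weighting on $\scat{A}^{op}$ matches the paper's own remark to that effect.
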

	\begin{proof}
	Again, we will only focus on the case of weighting because the case of the coweighting is so similar. The basic idea is that we can partition $\ob\left( \scat{A}\right)$ into isomorphism classes and get a ``sum of weighting'' for each of these classes. If we divide each of these sums by the number of objects in the isomorphism class, we will have a single number associated to the each isomorphism class (and thus to each element in the isomorphism class). We can then leverage the fact that $|\Hom|\colon \ob\left( \scat{A}\right) \times \ob\left(\scat{A}\right) \to \mathbb{N}$ is constant on isomorphism classes to show that we will, in fact, get a weighting. \par
	In more detail: for each $x \in \ob\left( \scat{A}\right)$, let $\alpha^{x} = \frac{\sum_{y \cong x}k^{y}}{C_{x}}$, where $C_x$ is the number of objects in the isomorphism class of $x$. Then $\alpha^{\bullet}$ is constant on isomorphic objects. Suppose $\scat{A}$ has $m$ isomorphism classes. 
	Then for any $x \in \ob\left( \scat{A}\right)$, 
	\begin{align*}
	\sum_{y \in \ob\left( \scat{A}\right)}|\Hom\left(x,y\right)|\alpha^{y} &= \sum_{i=1}^{m}\sum_{y \cong y_{i}} |\Hom\left(x,y_{i}\right)|\alpha^{y_{i}} \\
	&= \sum_{i=1}^{m} C_{y_{i}}|\Hom\left(x,y_{i}\right)|\alpha^{y_{i}} &\\
	&= \sum_{i=1}^{m} C_{y_{i}}|\Hom\left(x,y_{i}\right)|\frac{1}{C_{y{i}}}\sum_{y \cong y_{i}} k^{y} \\
	&= \sum_{i=1}^{m} |\Hom\left(x,y_{i}\right)|\sum_{y \cong y_{i}}k^{y} \\
	&= \sum_{i=1}^{m} \sum_{y \cong y_{i}} |\Hom\left(x,y\right)|k^{y} \\
	&= \sum_{y \in \ob\left( \scat{A}\right)}|\Hom\left(x,y\right)|k^{y} = 1, \\
	\end{align*}
	the last equality coming because $k^{\bullet}$ is a weighting, and the second to last equality coming because $|\Hom\left(x,y_{i}\right)| = |\Hom\left(x,y\right)|$ for $y$ isomorphic to $y_{i}$ by Lemma \ref{thm: isomorphic objects have bijective hom sets}. This was all we had to do to show that $\alpha^{\bullet}$ is a weighting.
	\end{proof}
	
	The last proof gives some hint of what to expect from the proof that equivalent finite categories with Euler characteristic have the same Euler characteristic. In particular, we will be manipulating sums like in the above proof. When we deal with equivalent categories, we will need one additional ingredient which tells us that we can switch between categories (and their respective weightings) while manipulating the sums. This comes in the form of a somewhat elementary lemma. 
	
	\begin{lem}
	\label{thm: equivalence of categories preserves isomorphism classes}
	If \kern-3em $\xymatrix{&\scat{A} \ar@<.5ex>[r]^{F} &\scat{B} \ar@<.5ex>[l]^{G}}$ is an equivalence of categories then $F$ preserves isomorphism classes of $\scat{A}$ and $G$ preserves isomorphism classes of $\scat{B}$. 
	\end{lem}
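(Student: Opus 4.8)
The plan is to show that $F$ and $G$ descend to mutually inverse bijections between the set of isomorphism classes of $\scat{A}$ and the set of isomorphism classes of $\scat{B}$; I treat $F$, the argument for $G$ being symmetric. First I would record the elementary fact that \emph{every} functor preserves isomorphisms: if $f\colon x \to y$ and $g\colon y \to x$ witness $x \cong y$ in $\scat{A}$, then the functor axioms give $F(f)\circ F(g) = F(f\circ g) = F(1_y) = 1_{Fy}$ and likewise $F(g)\circ F(f) = 1_{Fx}$, so $F(f)$ and $F(g)$ witness $Fx \cong Fy$. Consequently the assignment $[x] \mapsto [Fx]$ is well defined on isomorphism classes, yielding a map $F_{*}$ from the iso classes of $\scat{A}$ to those of $\scat{B}$; the same reasoning produces a map $G_{*}$ in the other direction.

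Next I would invoke the natural isomorphisms supplied by the equivalence. The relation $G \circ F \cong Id_{\scat{A}}$ unwinds to a natural isomorphism $\eta$ whose components $\eta_{x}\colon G(F(x)) \to x$ are isomorphisms for every object $x$, and $F \circ G \cong Id_{\scat{B}}$ gives isomorphisms $\epsilon_{y}\colon F(G(y)) \to y$. I would emphasize that only the \emph{pointwise} invertibility of these components is needed for the object-level statement; the naturality squares of the previous definition play no role here. From $\eta$ one reads off $G(F(x)) \cong x$ for all $x$, i.e. $G_{*}\circ F_{*}$ is the identity on iso classes of $\scat{A}$, and from $\epsilon$ one reads off $F(G(y)) \cong y$, i.e. $F_{*}\circ G_{*}$ is the identity on iso classes of $\scat{B}$.

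These two identities show that $F_{*}$ and $G_{*}$ are mutually inverse, hence bijections. In particular $F_{*}$ is injective, so that $Fx \cong Fy$ forces $x \cong y$ (one sees this directly by applying $G$ and composing with $\eta_{x}$ and $\eta_{y}$, giving $x \cong GFx \cong GFy \cong y$), and $F_{*}$ is surjective, since every object $y$ of $\scat{B}$ is isomorphic to $F(G(y))$. This is exactly the sense in which $F$ ``preserves isomorphism classes,'' and the corresponding statement for $G$ is obtained verbatim by interchanging the roles of $F$ and $G$ and of $\eta$ and $\epsilon$.

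I do not expect a serious obstacle: the content is entirely bookkeeping with the functor axioms together with the componentwise invertibility of $\eta$ and $\epsilon$. The one point that needs care is to keep straight that the \emph{only} feature of an equivalence used here is that the relevant natural transformations are natural \emph{isomorphisms}, so that each $\eta_{x}$ and each $\epsilon_{y}$ is invertible; everything else is a direct consequence of functoriality.
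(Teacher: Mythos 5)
Your argument is correct and complete: functoriality gives that $F$ and $G$ descend to maps $F_{*}$ and $G_{*}$ on isomorphism classes, and the componentwise invertibility of the natural isomorphisms $G \circ F \cong Id_{\scat{A}}$ and $F \circ G \cong Id_{\scat{B}}$ makes these maps mutually inverse bijections. The paper actually states this lemma without any proof (calling it ``somewhat elementary''), so there is no argument of theirs to compare against; yours fills the gap in the standard way. One point in your favor worth keeping explicit: you read ``preserves isomorphism classes'' in the strong sense of inducing a \emph{bijection} on classes (injectivity: $Fx \cong Fy$ forces $x \cong y$; surjectivity: every $b$ is isomorphic to $F(G(b))$), and that stronger reading is exactly what the subsequent proof of Theorem \ref{thm: equiv cats have same ec} silently relies on when it matches isomorphism classes of $\scat{A}$ with those of $\scat{B}$ via $a \mapsto F(a)$ and reindexes the sum $\sum_{b\colon b \cong F(a)} \ell^{b}$ over all of $\ob\left(\scat{B}\right)$.
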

	
	We are now ready to state and prove Leinster's theorem about the invariance of the Euler characteristic. Again, the proof is ours, though it is only a slight modification of that which appears in \cite{LEICAT}. 
	
	\begin{thm}[\cite{LEICAT} Prop 2.4]
	\label{thm: equiv cats have same ec}
	If \kern-3em $\xymatrix{&\scat{A} \ar@<.5ex>[r]^{F} &\scat{B} \ar@<.5ex>[l]^{G}}$ is an equivalence of finite categories and $\scat{B}$ has Euler characteristic, then $\scat{A}$ has Euler characteristic and $\EC{\scat{A}} = \EC{\scat{B}}$. 
	\end{thm}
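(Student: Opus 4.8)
The plan is to transport a weighting and a coweighting from $\scat{B}$ to $\scat{A}$ along the equivalence, correcting for the fact that $F$ need not be a bijection on objects. Since $\scat{B}$ has Euler characteristic, I would fix a weighting $\ell^{\bullet}$ and a coweighting $\ell_{\bullet}$ on $\scat{B}$, and by Lemma \ref{thm: constant weighting} assume both are constant on isomorphism classes. By Lemma \ref{thm: equivalence of categories preserves isomorphism classes}, together with the natural isomorphisms $G \circ F \cong Id_{\scat{A}}$ and $F \circ G \cong Id_{\scat{B}}$ (which give $GFa \cong a$ and $FGb \cong b$), the functor $F$ induces a bijection between the isomorphism classes of $\scat{A}$ and those of $\scat{B}$; write $a_1, \dots, a_m$ for representatives of the classes of $\scat{A}$ and choose representatives $b_i$ of $\scat{B}$ with $Fa_i \cong b_i$. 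I would also use that an equivalence is fully faithful, so that each component of $F$ is a bijection on hom-sets and $|\Hom\left(a, a'\right)| = |\Hom\left(Fa, Fa'\right)|$; this is a standard consequence of the defining natural isomorphisms, which I would record as a preliminary observation.

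The key subtlety is that the class of $a_i$ in $\scat{A}$ and the class of $b_i$ in $\scat{B}$ may have different sizes, say $C_{a_i}$ and $C_{b_i}$, so the naive assignment $k^{a} = \ell^{Fa}$ is not a weighting. Instead, for $a$ in the class of $a_i$ I would set $k^{a} = \frac{C_{b_i}}{C_{a_i}} \ell^{b_i}$, that is, spread the total weight that $\ell^{\bullet}$ assigns to the matching class of $\scat{B}$ evenly over the objects of the class of $a_i$. To check this is a weighting, fix $a \in \ob\left(\scat{A}\right)$ and group the sum $\sum_{a'} |\Hom\left(a,a'\right)| k^{a'}$ by isomorphism class of $a'$. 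Within each class, Lemma \ref{thm: isomorphic objects have bijective hom sets} makes $|\Hom\left(a,a'\right)|$ constant, so the $C_{a_i}$ equal terms combine with the factor $C_{b_i}/C_{a_i}$ to produce exactly $C_{b_i}\, |\Hom\left(Fa, b_i\right)| \ell^{b_i}$, once the hom count in $\scat{A}$ is rewritten as one in $\scat{B}$ via full faithfulness and Lemma \ref{thm: isomorphic objects have bijective hom sets}. Ungrouping over $\scat{B}$ then recovers $\sum_{b} |\Hom\left(Fa, b\right)| \ell^{b} = 1$, the weighting equation for $\ell^{\bullet}$ evaluated at $Fa$. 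Thus $k^{\bullet}$ is a weighting on $\scat{A}$.

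The coweighting is constructed dually, transporting $\ell_{\bullet}$ along $F$ by $k_{a} = \frac{C_{b_i}}{C_{a_i}} \ell_{b_i}$ and using the second half of Lemma \ref{thm: isomorphic objects have bijective hom sets} together with full faithfulness; the verification is identical with the roles of the two hom-variables exchanged. Since $\scat{A}$ now admits both a weighting and a coweighting, it has Euler characteristic, and I would finish by computing $\EC{\scat{A}} = \sum_{a} k^{a} = \sum_{i} C_{a_i} \cdot \frac{C_{b_i}}{C_{a_i}} \ell^{b_i} = \sum_{i} C_{b_i} \ell^{b_i} = \sum_{b} \ell^{b} = \EC{\scat{B}}$, where the class-size factors cancel precisely because of the correction built into $k^{\bullet}$.

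The main obstacle I anticipate is exactly this bookkeeping of isomorphism-class sizes: the equivalence is only essentially surjective, so $F$ can collapse or expand classes, and the entire content of the proof is finding the correction factor $C_{b_i}/C_{a_i}$ that compensates for this while keeping the weighting equation intact. A secondary point to be careful about is justifying full faithfulness, since it is needed to compare hom-set cardinalities across the two categories but is not among the lemmas already established.
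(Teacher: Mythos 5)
Your proposal is correct and follows essentially the same route as the paper: since $\ell^{\bullet}$ is chosen constant on isomorphism classes, your transported weighting $k^{a} = \frac{C_{b_i}}{C_{a_i}}\ell^{b_i}$ coincides with the paper's $k^{a} = \frac{1}{C_a}\sum_{b\colon b\cong F\left(a\right)}\ell^{b}$, and your final sum computation matches the paper's. The one substantive difference is that you actually verify the weighting equation $\sum_{a'}|\Hom\left(a,a'\right)|k^{a'}=1$ on $\scat{A}$, using full faithfulness of $F$ together with Lemma \ref{thm: isomorphic objects have bijective hom sets} to rewrite hom-counts over $\scat{B}$; the paper's proof only checks that $\sum_{a}k^{a}=\sum_{b}\ell^{b}$ and never confirms that $k^{\bullet}$ is in fact a weighting, so this step (and your observation that full faithfulness is needed but not among the stated lemmas) fills in a verification the paper omits rather than diverging from its approach.
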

	\begin{proof}
	Since $\scat{B}$ has Euler characteristic, it has a weighting $\ell^\bullet$ which is constant on isomorphic classes. We will want to show that we can come up with a weighting $k^{\bullet}$ on $\scat{A}$ that sums to $\sum_{b \in \ob\left( \scat{B}\right)}\ell^b$. With that in mind, define $k^a = \frac{1}{C_a}\sum_{b\colon b\cong F\left(a\right)}\ell^b$, where $C_{a}$ is, as before, the number of objects in the isomorphism class containing $a$. \\
	We have 
	\begin{align*}
	\sum_{a \in ob \ \scat{A}}k^a &= \sum_{a \in ob \scat{A}}\frac{1}{C_a}\sum_{b\colon b\cong F\left(a\right)}\ell^b \\
	&= \sum_{i=1}^m \sum_{a \cong a_i} \frac{1}{C_{a_i}}\sum_{b\colon b\cong F\left(a_i\right)}\ell^b \\ 
	&= \sum_{i=1}^m C_{a_i} \frac{1}{C_{a_i}}\sum_{b\colon b\cong F\left(a\right)}\ell^b \\ 
	&= \sum_{i=1}^m \sum_{b\colon b\cong F\left(a\right)}\ell^b \\
	&= \sum_{b \in ob \ \scat{B}}\ell^b \\
	\end{align*}
	as we were to show. We also need to show that a similar construction of coweighting works, but it is essentially identical to the case of the weighting which we just considered. 
	\end{proof}
	
	\subsection{Extending the Euler characteristic to weak n-categories}
	Having just run through the construction of Euler characteristic for a certain class of finite categories (namely, those which admit both a weighting and coweighting), we observe some analogies between categories and bicategories that could be of use in extending the Euler characteristic to certain finite bicategories. In particular, we have the following table which draws possible analogies which we hope can allow us to use theorems and proofs almost identical to those in the previous section.

	\begin{center}
	\begin{tabular}{|c | x{6cm}| x{6cm}|}
	\hline
	Analogy & 1-categories & bicategories\\ \hline
	\textbf{A1} & $\zeta\left(x,z\right)$ & $\EC{\scat{A}\left(x,z\right)}$\\ \hline
	\textbf{A2} & Isomorphic $x$ and $y$ have $|\Hom\left(x,z\right)| = |\Hom\left(y,z\right)|$ and $|\Hom\left(z,x\right)| = |\Hom\left(z,y\right)|$ for all objects $z$. & Internally equivalent $x$ and $y$ have $\EC{\scat{A}\left(x,z\right)} = \EC{\scat{A}\left(y,z\right)}$ and $\EC{\scat{A}\left(z,x\right)} = \EC{\scat{A}\left(z,y\right)}$for all objects $z$. \\ \hline
	\textbf{A3} & We can find weighting and coweighting constant on isomorphic classes. & We can find weighting and coweighting constant on classes of internally equivalent objects. \\ \hline
	\textbf{A4} & Functors preserve isomorphism classes. & Morphisms preserve internal equivalence classes. \\ \hline
	\end{tabular}
	\end{center}
	
	We now set out to justify each of these analogies. We take for granted that analogy $\textbf{A1}$ is suitable, for it will be our starting point. In particular, we will begin with the following definition:
	
	\begin{definition}
	The \emph{adjacency matrix} of a finite bicategory $\scat{A}$ under some total ordering $\left\{a_1, a_2, . . ., a_n\right\}$ is \[\emph{M}\left(\scat{A}\right) = 	\begin{bmatrix}
    	\EC{\scat{A}\left(a_1,a_1\right)}      & \dots & \EC{\scat{A}\left(a_1,a_n\right)} \\
   	 \vdots & \ddots & \vdots \\
   	 \EC{\scat{A}\left(a_n,a_1\right)}     & \dots & \EC{\scat{A}\left(a_n,a_n\right)}
	\end{bmatrix}. \]
	\end{definition}
	
	Note that this adjacency matrix is only defined when all of the Euler characteristics exist. Thus the following definition is already somewhat restricted: 
	
	\begin{definition}
	\label{defn: bicategory ec}
		If a bicategory $\scat{A}$ admits both a weighting $k^{\bullet}$ and coweighting $k_{\bullet}$ then its \emph{Euler characteristic} is $\EC{\scat{A}} =  \sum_{i=1}^{n} k^{a_{i}} = \sum_{i=1}^{n} k_{a_{i}}$. 
	\end{definition}
	
	The following example shows that our definition of Euler characteristic for bicategories agrees with Leinster's definition for finite categories.
	
	\begin{exmp}
		\label{calc: 1-cat as 2-cat}
			If we start with a finite category $\scat{A}$ with Euler characteristic, we can view it as a bicategory $\widetilde{\scat{A}}$ with $\scat{A}\left(x,y\right)$ a discrete category for each pair of objects $x,y$ in $\scat{A}$. In this case, the adjacency matrix of $\scat{A}$ is identical in each entry to the adjacency matrix of the $\widetilde{\scat{A}}$. We therefore have our definition of Euler characteristic agreeing with Leinster's definition for finite categories. 
	\end{exmp}
	
	In addition to this agreement with previous definition, using the adjacency matrix approach above should extend some of the nice properties the Euler characteristic had for finite categories. In particular, since the Euler characteristic for finite categories is invariant under equivalence and works well with products and coproducts, the adjacency matrix adapts quite nicely for products and coproducts of bicategories.\footnote{Going one level lower, we might think of the Euler characteristic for finite categories working well with products and coproducts in $\mathbf{Cat}$ specifically because cardinality works well with products and coproducts in $\mathbf{Set}$ and the adjacency matrix of a finite category is given in terms of cardinality. The analogy is even more complete: recall from Example \ref{exmp: set as category} that Euler characteristic for a finite category agrees with the notion of cardinality of a finite set.}
	
	We should note that our analogy \textbf{A2} really should reflect the restriction that all Euler characteristics in the adjacency matrix must exist. Keeping in mind this necessary restriction, we can confirm the validity of \textbf{A2} as a corollary to the following lemma:
	
	\begin{lem}
	If $x$ and $y$ are internally equivalent objects of a bicategory $\scat{A}$ then $\scat{A}\left(x,z\right)$ and $\scat{A}\left(y,z\right)$ are equivalent categories for all $z \in \ob\left( \scat{A}\right) $ and, similarly, $\scat{A}\left(w,x\right)$ and $\scat{A}\left(w,y\right)$ are equivalent categories for all $w \in \ob\left( \scat{A}\right)$. 
	\end{lem}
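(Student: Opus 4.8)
The plan is to produce an explicit equivalence of categories directly from the internal-equivalence data, using precomposition (and, for the second claim, postcomposition) with the witnessing 1-cells. Since $x$ and $y$ are internally equivalent, fix 1-cells $f \in \scat{A}\left(x,y\right)$ and $g \in \scat{A}\left(y,x\right)$ together with 2-isomorphisms $\alpha \colon g \circ f \to 1_x$ and $\beta \colon f \circ g \to 1_y$. For the first claim, fix $z \in \ob\left(\scat{A}\right)$ and define $F \colon \scat{A}\left(y,z\right) \to \scat{A}\left(x,z\right)$ by $h \mapsto h \circ f$ on 1-cells and $\theta \mapsto \theta \mycirc{h} \widetilde{1_f}$ on 2-cells, and symmetrically $G \colon \scat{A}\left(x,z\right) \to \scat{A}\left(y,z\right)$ by precomposition with $g$. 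First I would check that $F$ and $G$ are genuine functors; this is immediate because the horizontal composition $\mycirc{h}_{xyz}$ is a functor on a product category, and fixing one of its two arguments to be the object $f$ (resp. $g$) together with its identity 2-cell yields a functor in the remaining argument.

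Next I would construct the natural isomorphisms $G \circ F \cong \mr{Id}_{\scat{A}\left(y,z\right)}$ and $F \circ G \cong \mr{Id}_{\scat{A}\left(x,z\right)}$. For a 1-cell $h \in \scat{A}\left(y,z\right)$ we have $\left(G \circ F\right)\left(h\right) = \left(h \circ f\right) \circ g$, and I would take the component $\eta_h$ to be the vertical composite of the associator $A_{hfg} \colon \left(h \circ f\right) \circ g \to h \circ \left(f \circ g\right)$, the horizontal composite $\widetilde{1_h} \mycirc{h} \beta \colon h \circ \left(f \circ g\right) \to h \circ 1_y$, and the right unitor $r_h \colon h \circ 1_y \to h$. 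Each of these is a 2-isomorphism, so $\eta_h$ is a 2-isomorphism with the intended source and target. The component for $F \circ G$ is built the same way, replacing $\beta$ by $\alpha$: for $k \in \scat{A}\left(x,z\right)$ we compose $A_{kgf} \colon \left(k \circ g\right) \circ f \to k \circ \left(g \circ f\right)$, then $\widetilde{1_k} \mycirc{h} \alpha \colon k \circ \left(g \circ f\right) \to k \circ 1_x$, then $r_k \colon k \circ 1_x \to k$.

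The hard part will be verifying naturality, i.e. that for each 2-cell $\theta \colon h \to h'$ the square relating $\eta_h$, $\eta_{h'}$ and the two images of $\theta$ commutes in $\scat{A}\left(y,z\right)$. This I would reduce to three structural facts already present in the bicategory axioms: the associator and the right unitor are natural transformations, so each commutes with the action of $\theta$; and the horizontal composition is functorial, so the interchange law lets $\widetilde{1_h} \mycirc{h} \beta$ slide past $\theta \mycirc{h} \widetilde{1_f}$. Pasting the three resulting commuting cells together yields the naturality square, and the same argument applies to the $F \circ G$ side. This establishes that $\scat{A}\left(x,z\right)$ and $\scat{A}\left(y,z\right)$ are equivalent.

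Finally, the second claim is dual and I would only sketch it: for fixed $w \in \ob\left(\scat{A}\right)$ use postcomposition, setting $F' = f \circ - \colon \scat{A}\left(w,x\right) \to \scat{A}\left(w,y\right)$ and $G' = g \circ - \colon \scat{A}\left(w,y\right) \to \scat{A}\left(w,x\right)$. The required natural isomorphisms are assembled by the same recipe, now threading $\alpha$ and $\beta$ through the associators on the opposite side and invoking the left unitors $\ell$ in place of the right unitors. Since no new idea is involved, I would state that this case follows by the symmetric argument.
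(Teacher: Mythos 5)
Your proposal is correct and follows essentially the same route as the paper: precomposition with the witnessing 1-cells $f$ and $g$ gives the two functors, and the natural isomorphisms are assembled as vertical composites of an associator, a whiskering of $\alpha$ (resp.\ $\beta$), and a right unitor, with the second claim handled dually by postcomposition and left unitors. Your justification of functoriality (fixing one argument of the functor $\mycirc{h}_{xyz}$) and your explicit reduction of naturality to the naturality of the associator and unitor plus the interchange law are, if anything, slightly more detailed than the paper's own verification.
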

	\begin{proof}
	The two equivalences are essentially identical, so we only write out full details for the first. 
	In the case of the first equivalence, we need to come up with functors $F\colon \scat{A}\left(x,z\right)\to\scat{A}\left(y,z\right)$ and $G\colon \scat{A}\left(y,z\right)\to\scat{A}\left(x,z\right)$ which make up an equivalence of categories, i.e. $F \circ G \cong Id_{\scat{A}\left(x,z\right)}$ and $G \circ F \cong Id_{\scat{A}\left(y,z\right)}$. \par
	Since $x$ and $y$ are internally equivalent, we have 1-cells $f\colon x \to y$ and $g\colon y \to x$ such that $g \circ f \cong 1_{x}$ and $f \circ g \cong 1_{y}$ by 2-cells, say $\alpha$ and $\beta$ respectively. \par
	Let $F$ be given by 
	\begin{itemize}
	\item $\rho \mapsto \rho \circ g$
	\item $\delta \mapsto \delta \ \mycirc{h} \ \widetilde{\textbf{1}}_{g}$
	\end{itemize}
	Then $F\left(\widetilde{\textbf{1}}_{\rho}\right) = \widetilde{\textbf{1}}_{\rho} \mycirc{h} \widetilde{\textbf{1}}_{g} = \widetilde{\textbf{1}}_{\rho \circ g} = \widetilde{\textbf{1}}_{F\left(\rho\right)}$ and
	\[ F\left(\delta_{2} \ \mycirc{v}_{xz} \ \delta_{1}\right) = \left(\delta_{2} \ \mycirc{v} \ \delta_{1}\right)\mycirc{h} \ \widetilde{\textbf{1}_{g}} = \left(\delta_{2} \mycirc{h} \delta_{1}\right) \  \mycirc{v}_{yz}  \ \left(\delta_{1} \mycirc{h} \widetilde{\textbf{1}_{g}}\right) = F\left(\delta_{2}\right) \ \mycirc{v}_{yz} \ F\left(\delta_{1}\right). \]
	That is, $F$ is indeed a functor. We may similarly define a functor $G$ by
	\begin{itemize}
	\item $\pi \mapsto \pi \circ f$
	\item $\epsilon \mapsto \epsilon \ \mycirc{h} \ \widetilde{\textbf{1}}_{f}$
	\end{itemize}
	Furthermore, we now have the following the following commutative diagrams: \\
	$\xymatrix@=5em{& \rho \ar[d]^{\delta} \\ & \rho^{\prime}}$
	$\xymatrix@=5em{& \left(G \circ F\right)\left(\rho\right)\ar[d]^{\left(G\circ F\right)\left(\delta\right)} \ar[r]^{\eta_{\rho}} & Id_{\scat{A}\left(x,z\right)}\left(\rho\right) \ar[d]^{ Id_{\scat{A}\left(x,z\right)}\left(\delta\right)} & \\ & \left(G\circ F\right)\left(\rho^{\prime}\right) \ar[r]^{\eta_{\rho^{\prime}}} &  Id_{\scat{A}\left(x,z\right)}\left(\rho^{\prime}\right)}$\\
	$\xymatrix@=5em{& \pi \ar[d]^{\epsilon} \\ & \pi^{\prime}}$
	$\xymatrix@=5em{&\left(F \circ G\right)\left(\pi\right)\ar[d]^{\left(F\circ G\right)\left(\epsilon\right)} \ar[r]^{\nu_{\pi}} & Id_{\scat{A}\left(y,z\right)}\left(\pi\right) \ar[d]^{ Id_{\scat{A}\left(y,z\right)}\left(\epsilon\right)} & \\ & \left(F \circ G\right)\left(\pi^{\prime}\right) \ar[r]^{\nu_{\pi^{\prime}}} &  Id_{\scat{A}\left(y,z\right)}\left(\pi^{\prime}\right)}$\\
	given by $\eta\left(\rho\right) = r_{\rho} \ \mycirc{v}_{xz} \ \left(\widetilde{\textbf{1}}_{\rho} \ \mycirc{h} \ \alpha \right) \ \mycirc{v}_{xz} \ A_{\rho gf}$ and $\nu_{\pi}=  r_{\pi} \ \mycirc{v}_{yz} \ \left(\widetilde{\textbf{1}}_{\pi} \ \mycirc{h} \ \beta \right) \ \mycirc{v}_{yz} \ A_{\pi fg}$. Recall the role of the $r$ and $A$ as right unitors and associators, which are (2-)isomorphisms. \\
	We therefore have the necessary natural isomorphisms to conclude that $\scat{A}\left(x,z\right) \cong \scat{A}\left(y,z\right)$ for arbitrary $z \in \ob\left( \scat{A}\right)$. The proof that $\scat{A}\left(w,x\right) \cong \scat{A}\left(w,y\right)$ for arbitrary $w \in \ob\left( \scat{A}\right)$ is similar.  
	\end{proof}
	
	\begin{cor}
	\label{thm: equiv objects have same ec}
	Let $x$ and $y$ be internally equivalent objects of a bicategory, and let $z$ be any object in the same bicategory. Then $\mathbf{\left(1\right)}$ $\EC{\scat{A}\left(x,z\right)} = \EC{\scat{A}\left(y,z\right)}$ whenever one of these quantities exists and $\mathbf{\left(2\right)}$ $\EC{\scat{A}\left(z,x\right)} = \EC{\scat{A}\left(z,y\right)}$ whenever one of these quantities exists. 
	\end{cor}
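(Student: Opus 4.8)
The plan is to treat this as a direct consequence of the preceding lemma together with Theorem \ref{thm: equiv cats have same ec}. The lemma already supplies the essential content: from the internal equivalence of $x$ and $y$ it produces equivalences of categories $\scat{A}\left(x,z\right) \simeq \scat{A}\left(y,z\right)$ and $\scat{A}\left(z,x\right) \simeq \scat{A}\left(z,y\right)$ for every object $z$. What remains is purely to transport Euler characteristics across these equivalences, which is exactly what Theorem \ref{thm: equiv cats have same ec} accomplishes.

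First I would observe that, because $\scat{A}$ is a finite bicategory, each of the hom-categories $\scat{A}\left(x,z\right)$, $\scat{A}\left(y,z\right)$, $\scat{A}\left(z,x\right)$, and $\scat{A}\left(z,y\right)$ is a finite category; this is precisely the finiteness hypothesis required to invoke the theorem. Then, to prove $\mathbf{(1)}$, I would feed the equivalence $\scat{A}\left(x,z\right) \simeq \scat{A}\left(y,z\right)$ into Theorem \ref{thm: equiv cats have same ec}. If $\EC{\scat{A}\left(y,z\right)}$ exists, taking $\scat{A}\left(y,z\right)$ in the role of the target category and $\scat{A}\left(x,z\right)$ in the role of the source, the theorem yields that $\scat{A}\left(x,z\right)$ has Euler characteristic and that $\EC{\scat{A}\left(x,z\right)} = \EC{\scat{A}\left(y,z\right)}$.

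The only point requiring a word of care is the phrase ``whenever one of these quantities exists,'' since the theorem as stated is asymmetric: it assumes the \emph{target} category of the equivalence already has Euler characteristic. I would dispose of this by appealing to the symmetry of categorical equivalence, namely that interchanging the roles of the two functors $F$ and $G$ of an equivalence again produces an equivalence. Hence if instead $\EC{\scat{A}\left(x,z\right)}$ is the quantity assumed to exist, I would simply run the theorem in the opposite direction to conclude that $\EC{\scat{A}\left(y,z\right)}$ exists and equals it. Part $\mathbf{(2)}$ then follows verbatim from the second equivalence furnished by the preceding lemma. I expect no genuine obstacle here: all the substance lives in the lemma and in Theorem \ref{thm: equiv cats have same ec}, so the corollary is essentially their conjunction, with only the symmetry observation needed to match the bidirectional ``whenever one exists'' phrasing.
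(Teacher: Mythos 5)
Your proposal is correct and matches the paper's intent exactly: the paper gives no separate proof of this corollary, only the remark that it follows from the preceding lemma together with Theorem \ref{thm: equiv cats have same ec}, which is precisely the combination you carry out. Your extra observation about running the equivalence in both directions to handle the symmetric ``whenever one exists'' phrasing is a sensible and harmless elaboration of the same argument.
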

	
	Note that the above corollary also depends on Theorem \ref{thm: equiv cats have same ec}. Our analogies \textbf{A3} and \textbf{A4} are justified by theorems whose proofs are essentially identical to those of their 1-category analogues, Lemma \ref{thm: constant weighting} and Lemma \ref{thm: equivalence of categories preserves isomorphism classes}, respectively.
	
	\begin{lem}
	\label{thm: weighting on homotopy classes}
	If a bicategory $\scat{A}$ has a weighting $k^{\bullet}$ (resp. coweighting $k_{\bullet}$) then it has a weighting $\alpha$ (resp. coweighting $\beta$) which are constant on internal equivalence classes.
	\end{lem}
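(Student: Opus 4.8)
The plan is to mirror the proof of Lemma~\ref{thm: constant weighting} almost verbatim, replacing the Hom-set cardinalities $|\Hom\left(x,y\right)|$ by the entries $\EC{\scat{A}\left(x,y\right)}$ of the bicategorical adjacency matrix, and replacing the appeal to Lemma~\ref{thm: isomorphic objects have bijective hom sets} (constancy of $|\Hom|$ on isomorphism classes) by an appeal to Corollary~\ref{thm: equiv objects have same ec} (constancy of $\EC{\scat{A}\left(-,-\right)}$ on internal equivalence classes). I will only treat the weighting, since the coweighting is handled symmetrically, invoking part $\mathbf{\left(2\right)}$ of Corollary~\ref{thm: equiv objects have same ec} where the weighting case uses part $\mathbf{\left(1\right)}$. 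Throughout, I write $y \sim x$ to mean that $y$ and $x$ are internally equivalent.

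First I would average the given weighting over each internal equivalence class. Writing $C_x$ for the number of objects internally equivalent to $x$, I define
\[
\alpha^{x} = \frac{1}{C_x} \sum_{y \sim x} k^{y}.
\]
By construction $\alpha^{\bullet}$ is constant on internal equivalence classes, so it remains only to check that $\alpha^{\bullet}$ satisfies the defining equation of a weighting, namely $\sum_{y \in \ob\left(\scat{A}\right)} \EC{\scat{A}\left(x,y\right)} \alpha^{y} = 1$ for every $x \in \ob\left(\scat{A}\right)$.

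Next I would verify this equation by the same chain of manipulations used in Lemma~\ref{thm: constant weighting}. Grouping the objects of $\scat{A}$ into its $m$ internal equivalence classes with representatives $y_1, \dots, y_m$, the sum over each class collapses: since $\EC{\scat{A}\left(x,y\right)} = \EC{\scat{A}\left(x,y_i\right)}$ for every $y \sim y_i$ by Corollary~\ref{thm: equiv objects have same ec}, and $\alpha^{\bullet}$ is constant on the class, the $i$-th class contributes $C_{y_i}\,\EC{\scat{A}\left(x,y_i\right)}\,\alpha^{y_i}$. Substituting the definition of $\alpha^{y_i}$ cancels the factor $C_{y_i}$; I would then push $\EC{\scat{A}\left(x,y_i\right)}$ back inside the inner sum (again using Corollary~\ref{thm: equiv objects have same ec} to rewrite it as $\EC{\scat{A}\left(x,y\right)}$) and recombine the classes, recovering $\sum_{y \in \ob\left(\scat{A}\right)} \EC{\scat{A}\left(x,y\right)} k^{y}$, which equals $1$ because $k^{\bullet}$ is a weighting on $\scat{A}$.

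The one point requiring care---rather than a genuine obstacle---is the existence of the Euler characteristics being compared. The hypothesis that $\scat{A}$ \emph{has} a weighting presupposes that its adjacency matrix is defined, i.e.\ that every $\EC{\scat{A}\left(x,y\right)}$ exists; this is precisely what licenses each application of Corollary~\ref{thm: equiv objects have same ec}, whose conclusion is conditional on one of the two Euler characteristics existing. Consequently no new existence issues arise, and the argument goes through unchanged from the $1$-categorical case.
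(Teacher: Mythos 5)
Your proposal is correct and is exactly the argument the paper intends: it states that the proof is ``essentially identical'' to that of Lemma~\ref{thm: constant weighting}, with internal equivalence classes in place of isomorphism classes and Corollary~\ref{thm: equiv objects have same ec} supplying the constancy of $\EC{\scat{A}\left(x,y\right)}$ on those classes in place of Lemma~\ref{thm: isomorphic objects have bijective hom sets}. Your added remark that the existence of all the Hom-category Euler characteristics is already guaranteed by the hypothesis that a weighting exists is a sensible clarification consistent with the paper's own caveat about when the adjacency matrix is defined.
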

	
	\begin{lem}
	\label{thm: equivalence of bicategories preserves homotopy classes}
	If \kern-3em $\xymatrix{&\scat{A} \ar@<.5ex>[r]^{F} &\scat{B} \ar@<.5ex>[l]^{G}}$ is an equivalence of bicategories then $F$ preserves internal equivalence classes of $\scat{A}$ and $G$ preserves internal equivalence classes of $\scat{B}$. 
	\end{lem}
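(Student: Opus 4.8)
The plan is to reduce the statement to the single fact that a \emph{homomorphism} of bicategories carries internally equivalent $0$-cells to internally equivalent $0$-cells; since an equivalence of bicategories is by definition given by homomorphisms $F$ and $G$, applying this fact to $F$ and to $G$ yields the lemma. This mirrors the $1$-categorical situation, where the content of Lemma~\ref{thm: equivalence of categories preserves isomorphism classes} is essentially that any functor sends isomorphisms to isomorphisms; as there, the full equivalence structure is not needed for this direction, only that the relevant maps are homomorphisms.

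First I would fix internally equivalent $0$-cells $x$ and $y$ of $\scat{A}$, so that there are $1$-cells $f \colon x \to y$ and $g \colon y \to x$ together with $2$-isomorphisms $\alpha \colon g \circ f \to 1_x$ and $\beta \colon f \circ g \to 1_y$. Applying $F$ produces $1$-cells $F(f) \colon Fx \to Fy$ and $F(g) \colon Fy \to Fx$ in $\scat{B}$, and since each $F_{xy}$ is a functor between hom-categories it carries $\alpha, \beta$ to $2$-isomorphisms $F(\alpha) \colon F(g \circ f) \to F(1_x)$ and $F(\beta) \colon F(f \circ g) \to F(1_y)$ (functors preserve isomorphisms, the same observation underlying Lemma~\ref{thm: equivalence of categories preserves isomorphism classes}).

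The next step is to replace $F(g\circ f)$ by $F(g)\circ F(f)$ and $F(1_x)$ by $1_{Fx}$ using the structural data of the homomorphism. Because $F$ is a homomorphism, the comparison $2$-cells $\phi_{gf}$ (between $F(g)\circ F(f)$ and $F(g\circ f)$) and $\phi_x$ (between $1_{Fx}$ and $F(1_x)$) are themselves $2$-isomorphisms. Composing vertically in the hom-category $\scat{B}(Fx,Fx)$,
\[
\widetilde{\alpha} \;=\; \phi_x^{-1} \ \mycirc{v} \ F(\alpha) \ \mycirc{v} \ \phi_{gf} \colon F(g) \circ F(f) \longrightarrow 1_{Fx}
\]
is a $2$-isomorphism, being a vertical composite of $2$-isomorphisms; an identical construction in $\scat{B}(Fy,Fy)$ produces a $2$-isomorphism $\widetilde{\beta} \colon F(f) \circ F(g) \to 1_{Fy}$. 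The quadruple $\left(F(f), F(g), \widetilde{\alpha}, \widetilde{\beta}\right)$ then witnesses that $Fx$ and $Fy$ are internally equivalent, so $F$ maps each internal equivalence class of $\scat{A}$ into a single internal equivalence class of $\scat{B}$. The same argument applied to the homomorphism $G$ gives the corresponding statement for $G$.

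I expect the only real care to be in bookkeeping the orientations of the comparison $2$-cells $\phi_{gf}$ and $\phi_x$: Definition~\ref{defn: morphism between bicats} only asserts a $2$-cell ``between'' the relevant $1$-cells, but the homomorphism hypothesis guarantees these are isomorphisms, so their direction is immaterial and we may invert freely when assembling $\widetilde{\alpha}$ and $\widetilde{\beta}$. No appeal to the coherence diagrams of $F$ beyond the invertibility of $\phi_{gf}$ and $\phi_x$ is required, which is what keeps the argument as short as its $1$-categorical counterpart.
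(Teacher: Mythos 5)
The paper itself omits the proof of this lemma (``For brevity, we omit actual details\dots the proof is \dots messier''), so your proposal supplies an argument where the paper gives none. What you prove is correct as far as it goes: since an equivalence of bicategories is by definition built from homomorphisms, the comparison cells $\phi_{gf}$ and $\phi_x$ are invertible, the functor $F_{xx}$ sends the $2$-isomorphism $\alpha$ to a $2$-isomorphism, and the vertical composite $\phi_x^{-1}\ \mycirc{v}\ F(\alpha)\ \mycirc{v}\ \phi_{gf}\colon F(g)\circ F(f)\to 1_{Fx}$ is again a $2$-isomorphism; hence homomorphisms carry internally equivalent $0$-cells to internally equivalent $0$-cells.

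There is, however, a gap relative to what the lemma must deliver. Its $1$-categorical counterpart, Lemma~\ref{thm: equivalence of categories preserves isomorphism classes}, is used in Theorem~\ref{thm: equiv cats have same ec} precisely at the step $\sum_{i=1}^m\sum_{b\colon b\cong F(a_i)}\ell^b=\sum_{b\in\ob\scat{B}}\ell^b$, which requires the classes of $F(a_1),\dots,F(a_m)$ to be pairwise distinct and to exhaust $\ob\scat{B}$ --- that is, ``preserves (internal) equivalence classes'' must mean that $F$ induces a \emph{bijection} on classes, not merely a well-defined map. Your argument, applied to both $F$ and $G$, only yields well-definedness in each direction. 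To finish, you need to extract from the internal equivalence between $G\circ F$ and $Id_{\scat{A}}$ in $[\scat{A},\scat{A}]$ a pointwise internal equivalence between $GFx$ and $x$ for each $0$-cell $x$ (the $1$-cell components $\sigma_x$ of the strong transformations, witnessed as internal equivalences by the invertible modifications), and dually between $FGb$ and $b$; injectivity on classes then follows from the chain $Fx\simeq Fy\Rightarrow GFx\simeq GFy\Rightarrow x\simeq y$, and surjectivity from $FGb\simeq b$. This is exactly the ``messier'' content the paper alludes to, and it is the part your proposal explicitly sets aside when you remark that the full equivalence structure is not needed.
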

	\begin{proof}
	For brevity, we omit actual details. The result follows from definition of equivalence of bicategories. The proof is somewhat similar to a standard proof that equivalence of categories preserves isomorphism classes, but it is messier. 
	\end{proof}
	
	Looking back at our table, we see that all of our analogies are justified. We therefore have the following theorem which tells us that Euler characteristic, as defined in Definition \ref{defn: bicategory ec}, is invariant under equivalence of bicategories.
	
	\begin{thm}
	Let $\scat{A}$ and $\scat{B}$ be equivalent bicategories. Then either both have Euler characteristic, in which case $\EC{\scat{A}} = \EC{\scat{B}}$, or neither has Euler characteristic.
	\end{thm}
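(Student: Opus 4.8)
The plan is to run the argument of Theorem \ref{thm: equiv cats have same ec} almost verbatim, using the substitutions recorded in the analogy table: cardinalities $|\Hom\left(x,z\right)|$ are replaced by $\EC{\scat{A}\left(x,z\right)}$, isomorphism of objects is replaced by internal equivalence, and Lemmas \ref{thm: isomorphic objects have bijective hom sets}, \ref{thm: constant weighting}, \ref{thm: equivalence of categories preserves isomorphism classes} are replaced by Corollary \ref{thm: equiv objects have same ec} and Lemmas \ref{thm: weighting on homotopy classes}, \ref{thm: equivalence of bicategories preserves homotopy classes}. First I would reduce the dichotomy to a single implication: it suffices to show that if $\scat{B}$ has Euler characteristic then so does $\scat{A}$, with $\EC{\scat{A}} = \EC{\scat{B}}$. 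Interchanging the roles of $\scat{A}$ and $\scat{B}$ (and of the homomorphisms $F$ and $G$) then handles the converse, so that either both bicategories have Euler characteristic or neither does.

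For that implication, suppose $\scat{B}$ has Euler characteristic. By Lemma \ref{thm: weighting on homotopy classes} I may choose a weighting $\ell^{\bullet}$ on $\scat{B}$ that is constant on internal equivalence classes, and likewise a coweighting with the same property. Transporting along the equivalence exactly as in the $1$-categorical proof, I would set
\[ k^{a} = \frac{1}{C_{a}} \sum_{b \,\simeq\, F\left(a\right)} \ell^{b}, \]
where $\simeq$ now denotes internal equivalence in $\scat{B}$ and $C_{a}$ is the number of objects in the internal equivalence class of $a$ in $\scat{A}$; the analogous formula defines a candidate coweighting. Lemma \ref{thm: equivalence of bicategories preserves homotopy classes} guarantees that $F$ carries internal equivalence classes of $\scat{A}$ to internal equivalence classes of $\scat{B}$, so the indexing sets in this sum are well behaved, and in fact $F$ induces a bijection between the two collections of equivalence classes.

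The substantive step is verifying that $k^{\bullet}$ is genuinely a weighting on $\scat{A}$, i.e. that $\sum_{a'} \EC{\scat{A}\left(a,a'\right)} k^{a'} = 1$ for every $a$. Here two facts do the work. First, because $F$ is a homomorphism belonging to a biequivalence, each of its hom-functors $\scat{A}\left(a,a'\right) \to \scat{B}\left(F a, F a'\right)$ is an equivalence of categories, so by Theorem \ref{thm: equiv cats have same ec} the matrix entries satisfy $\EC{\scat{A}\left(a,a'\right)} = \EC{\scat{B}\left(F a, F a'\right)}$. Second, Corollary \ref{thm: equiv objects have same ec} tells us these Euler characteristics are constant on internal equivalence classes, which is precisely what licenses grouping the objects of $\scat{B}$ by their equivalence class and pulling the computation back to the weighting identity for $\ell^{\bullet}$ at the object $F\left(a\right)$. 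Once $k^{\bullet}$ is known to be a weighting, the telescoping computation from Theorem \ref{thm: equiv cats have same ec} goes through unchanged to give $\sum_{a} k^{a} = \sum_{b} \ell^{b}$; the same reasoning applied to the coweighting yields the corresponding identity for the coweighting sums. Since $\scat{B}$ has Euler characteristic, the two sums over $\scat{B}$ agree, so $\scat{A}$ admits both a weighting and a coweighting with equal total, giving $\EC{\scat{A}} = \EC{\scat{B}}$.

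The main obstacle I anticipate is exactly this verification that the transported function is a weighting. In the $1$-categorical setting the corresponding step rests quietly on $F$ being fully faithful and essentially surjective; in the bicategorical setting its replacement --- that a biequivalence is a \emph{local} equivalence (an equivalence on each hom-category) and induces a bijection on internal equivalence classes of objects --- is genuine content that must be extracted from the definition of equivalence of bicategories, via the homomorphisms witnessing that $G \circ F$ is internally equivalent to $Id_{\scat{A}}$ in $[\scat{A},\scat{A}]$ and $F \circ G$ to $Id_{\scat{B}}$ in $[\scat{B},\scat{B}]$. Everything else is the bookkeeping of the earlier proof carried through the analogy table.
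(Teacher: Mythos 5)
Your proposal is correct and matches the paper's intent exactly: the paper gives no separate argument for this theorem, stating only that the proof is ``almost identical to that of Theorem \ref{thm: equiv cats have same ec}, modifying the proof to fit the bicategories context as appropriate,'' which is precisely the transport-of-weightings argument you carry out via the analogy table and Lemmas \ref{thm: weighting on homotopy classes} and \ref{thm: equivalence of bicategories preserves homotopy classes}. If anything you are more careful than the source, since you explicitly verify that the transported $k^{\bullet}$ is a weighting (via the local equivalence of hom-categories), a step the paper's one-categorical proof also leaves implicit.
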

	
	The proof of this theorem is almost identical to that of Theorem \ref{thm: equiv cats have same ec}, modifying the proof to fit the bicategories context as appropriate. 
	
	By generalizing the work above, a sketch falls out for the case of weak n-categories for any positive integer n. If we can view a weak n-category as having a weak (n-1)-category between any two objects then we can use \textbf{A1}. If we have some appropriate notion of equivalence which ultimately resides in our (n-1)-category then we have hopes of using \textbf{A2-A4}. However, the definition of weak n-category is not agreed upon, and even using an approximation of Leinster's definition of Bicategory to move to $n \geq 3$ yields a large collection of identities, diagrams, and equivalences which are simply too numerous to reasonably include here. We thus conclude our discussion of weak n-categories with the above sketch, and we shift our attention to the case of $\left(\infty,1\right)-$categories.
	
	\section{$\left(\infty,1\right)$-categories}
	\subsection{Introduction to $\left(\infty,1\right)$-categories}
	When we move to $\infty$-categories, we have a choice to make, for there are a number of ``competing'' definitions for what an $\infty$-category is. We will attempt to develop the Euler characteristic for the $\left(\infty,1\right)$-categories defined by Lurie in \cite{LURIEHTT}. Before we unveil our notion of Euler characteristic, however, we need to provide some background information on what these $\infty$-categories are. To put it briefly, they are special simplicial sets, so we shall now provide some basic information about simplicial sets and also introduce the appropriate notion of ``specialness.'' To begin, we present definitions of abstract simplicial complexes and ordered simplicial complex. A more complete reference is \cite{DWYNOTES}. 
	
	\begin{definition}
	\label{defn: simplicial complex}
	An \emph{abstract simplicial complex} is a pair $K = \left(V_{K},S_{K}\right)$ comprised of a set $V_{K}$ of vertices and a collection $S_{K}$ of non-empty, finite subsets (i.e. simplices) of $V_{K}$ such that the following two properties hold:
	\begin{itemize}
	\item If $\varnothing \neq \sigma^{\prime} \subset \sigma$ and $\sigma \in S_{K}$ then $\sigma^{\prime} \in S_{K}$. We call $\sigma^{\prime}$ a \emph{face} of $\sigma$.
	\item $\left\{v\right\} \in S_{K}$ for every $v \in V_{K}$. 
	\end{itemize}
	An \emph{ordered simplicial complex} is an abstract simplicial complex $K = \left(V_{K},S_{K}\right)$ in which $V_{K}$ has a partial ordering which is a total ordering on each simplex $\sigma \in S_{K}$. 
	\end{definition}
	
	\begin{exmp}
	\label{exmp: standard abstract n-simplex}
	For any $n \in \mathbb{Z}_{\geq 0}$, let $\underline{n} = \left\{0, 1, ..., n\right\}$ equipped with its usual ordering. Then $[n] = \left(\underline{n},2^{\underline{n}} \setminus \left\{\varnothing\right\}\right)$ is an ordered simplicial complex called the \emph{standard abstract n-simplex}.
	\end{exmp}
	
	The standard abstract n-simplex may be contrasted with the \emph{standard topological n-simplex}.

	\begin{definition}
	\label{defn: standard n-simplex}
	The \emph{standard n-simplex} $\Delta_{n}$ is the topological space given by the set \[\left\{\left(x_{0},x_{1},...,x_{n}\right) \in \mathbb{R}^{n+1} |  \sum_{i=0}^{n} x_{i} = \allowbreak 1, x_{i} \geq 0\right\} \] under $\mathbb{R}^{n+1}$'s Euclidean subspace topology. 
	\end{definition}
	
	The first few standard n-simplices are actually quite familiar. The standard 0-simplex $|\Delta_{0}|$ is just a single point, $\left\{1\right\}$; the standard 1-simplex is just the closed line segment connecting $\left(0,1\right)$ and $\left(1,0\right)$; the standard 2-simplex is the triangle with vertices $\left(0,0,1\right), \left(0,1,0\right),$ and $\left(1,0,0\right)$; the standard 3-simplex is a tetrahedron with vertices $\left(0,0,0,1\right), \left(0,0,1,0\right), \left(0,1,0,0\right),$ and  $\left(1,0,0,0\right)$. We see that these are relatively simple spaces to construct. However, we would like to relate them to other topological spaces, and we will actually find that a generalization of simplicial complexes is a good step to doing so. 
	
	\begin{definition}
	\label{defn: n-th singular complex of X}
	Let $X$ be a topological space. We denote by \emph{$\mathrm{Sing}_{n}\left(X\right)$} the set $\Hom_{\mathbf{Top}}\left(\Delta_{n},X\right)$, i.e. the continuous maps from $\Delta_{n}$ to $X$. 
	\end{definition}
	
	It turns out that maps between our standard abstract n-simplices actually induce maps between our $\mathrm{Sing}_{n}\left(X\right)$. In outlining the details for this process, we prefer to forget $[n]$'s status as a simplicial complex and just focus on its vertex set, $\underline{n}$. We can think of $\left\{\underline{n}\right\}_{n \geq 0}$ as the object set of a full subcategory $\Delta$ of $\mathbf{Set}$ if we take the morphisms between two of these objects to be the order-preserving maps. We will focus on order-preserving injections $\delta^{i}\colon\underline{m}\to\underline{m+1}$ which omit a fixed $i$ and order-preserving surjections $\sigma^{i}\colon\underline{m+1}\to\underline{m}$ which repeat a fixed $i$. 
In detail, $\delta^{i}$ is given by \[ k \mapsto \left\{ \begin{array}{ll} k & k < i \\ k + 1 & k \geq i, \end{array}\right. \] and $\sigma^{i}$ is given by \[ k \mapsto \left\{ \begin{array}{ll} k & k < i \\ k - 1 & k \geq i. \end{array}\right. \] 
	
	Every morphism in $\Delta$ can be generated by composing degeneracy and face maps. Additionally, these maps induce ways of taking ``faces'' and ``degeneracies'' in the geometric context. For instance, for some fixed $i$ and a topological space $X$, $\delta^{i}$ induces a \emph{face map} $\delta_{i}\colon \mathrm{Sing}_{m+1}\left(X\right) \to \mathrm{Sing}_{m}\left(X\right)$ given by $\delta_{i} = f \circ |\delta^{i}|$ for each $f \in \mathrm{Sing}_{m}\left(X\right)$. Similarly, $\sigma^{i}$ induces a \emph{degeneracy map} $\sigma_{i}\colon \mathrm{Sing}_{m}\left(X\right) \to \mathrm{Sing}_{m+1}\left(X\right)$ given by $\sigma_{i} = f \circ |\sigma^{i}|$ for each $f \in \mathrm{Sing}_{m+1}\left(X\right)$. 
	 In both of these cases, we have ``geometrically realized'' our $\delta^{i}$ or $\sigma^{i}$ in order to have the appropriate compositions well-defined.
	
	The astute reader may have noticed another functor hanging around in this discussion: $\mathrm{Sing}\left(X\right)\colon \Delta \to \mathbf{Set}$, which is given by 
	\begin{itemize}
	\item $\underline{n} \mapsto \Hom_{\mathbf{Top}}\left(\Delta_{n},X\right)$
	\item $h \in \Hom_{\Delta}\left(\underline{m},\underline{m'}\right) \mapsto |h| \in \Hom_{\mathbf{Top}}\left(\Delta_{m},\Delta_{m'}\right)$. 
	\end{itemize}
	
	We can also think of any standard abstract n-simplex $[n]$ as a (contravariant) functor $[n]\colon \Delta \to \mathbf{Set}$ by the so-called ``Hom-functor'':
	\begin{itemize}
	\item $\underline{m} \mapsto \Hom_{\Delta}\left(\underline{m},\underline{n}\right)$
	\item $\left(f: \underline{m} \to \underline{m'}\right) \mapsto \left(\Hom\left(f,\underline{n}\right)\colon \Hom\left(\underline{m'},\underline{n}\right) \to \Hom\left(\underline{m},\underline{n}\right)\right)$, which maps $\left(g\colon \underline{m'} \to \underline{n}\right)$ to  $g \circ f$.
	\end{itemize}
	
	Both of these functors, $[n]$ and $\mathrm{Sing}\left(X\right)$, are examples of what are called simplicial sets.
	
	\begin{definition}
	A \emph{simplicial set} X is a functor $X\colon\Delta^{op} \to \mathbf{Set}$
	\end{definition}
	
	Because of the structure of $\Delta$, simplicial sets generally have face maps and degeneracy maps induced by the $\sigma^{i}$ and $\delta^{i}$ in $\Delta$. By specifying an element to omit, $\delta^{i}$ determines a way of moving down a ``dimension'' via $\delta_{i}$. Similarly, by repeating an element, $\sigma^{i}$ determines a way of moving up a ``dimension'' in a degenerate way. These face maps and degeneracy maps satisfy the following ``simplicial identities'':
	
	\begin{enumerate}
	\item $\delta_{i}\delta_{j} = \delta_{j-1}\delta_{i}$, \kern+3em $i < j$
	\item $\delta_{i}\sigma_{j} = \sigma_{j-1}\delta_{i}$, \kern+3em $ i < j$
	\item $\delta_{j}\sigma_{j} = \delta_{j+1}\sigma_{j} = 1$
	\item $\delta_{i}\sigma_{j} = \sigma_{j}\delta_{i-1}$, \kern+3em $ i > j + 1$
	\item $\sigma_{i}\sigma_{j} = \sigma_{j+1}\sigma_{i}$, \kern+3em $i \leq j$
	\end{enumerate}
	
	As pointed out in \cite{GOESHT}, from whence the above identities were reproduced, specifying a simplicial set $X$ amounts to saying what the object function of $X$ does and what the face and degeneracy maps (i.e. the maps that satisfy the five identities above) are. Simplicial sets are objects of the functor category $\mathbf{sSet} = [\Delta^{op},\mathbf{Set}]$, whose objects are functors from $\Delta^{op}$ to $\mathbf{Set}$ and whose morphisms are natural transformations between these functors. The objects of this category, i.e. the simplicial sets, are sometimes referred to as \emph{presheaves} on $\Delta$ and the morphisms are sometimes called \emph{simplicial maps}. 
	
	\begin{exmp}
	\label{exmp: one point simplicial set}
	The one-point space $\left\{*\right\}$ can be thought of as the functor which sends every $\underline{m}$ to a one-element set.
	\end{exmp}
	
	\begin{exmp}
	\label{exmp: nerve of a category}
	For any category $\scat{A}$, we can associate a simplicial set to it, called the \emph{Nerve of} $\scat{A}$ and denoted $\mathcal{N}\left(\scat{A}\right)$. This simplicial set can be specified by the following  information.
	\begin{itemize}
	\item $\mathcal{N}\left(\scat{A}\right)_{n} = \left\{\text{$n$-paths in } \scat{A}\right\}$, i.e. the collection of all sequences
	\[ \xymatrix{x_{0} \ar[r]^{f_{1}} & x_{1} \ar[r]^{f_{2}} & x_{2} \ar[r] & . . . \ar[r]^{f_{n-1}} & x_{n-1} \ar[r]^{f_{n}} & x_{n}}\]
	 of composable morphisms. 
	\item The degeneracy map $\sigma_{i}\colon \mathcal{N}\left(\scat{A}\right)_{n} \to \mathcal{N}\left(\scat{A}\right)_{n+1}$ corresponds to adding an identity morphism between $x_{i}$ and $x_{i+1}$.
	\item The face map $\delta{i}\colon\mathcal{N}\left(\scat{A}\right)_{n} \to \mathcal{N}\left(\scat{A}\right)_{n-1}$ corresponds to composing $f_{i}$ and $f_{i-1}$ if $i$ is not $0$ or $n$. If $i$ is $0$ or $n$ then it corresponds to simply removing the morphism $f_{0}$ or $f_{n}$ from the diagram as appropriate. 
	\end{itemize}
	\end{exmp}
	
	The nerve of a category gives us a model for a space associated with $\scat{A}$, and some $\left(\infty,1\right)$-categories are intimately related to nerves, as we shall see from Theorem \ref{thm: nerve of a category characterization}. In order to state this theorem and define $\left(\infty,1\right)$-categories, we need to discuss one more important example of simplicial set: horns. 
	
	\begin{exmp}
	For fixed $n$, the \emph{k-horn} $\Lambda^{n}_{k}$ is a subsimplicial set of $[n]$ which is generated by $\left\{\delta_{i}\left(1_{n}\right) \colon i \neq k\right\}$, where $1_{n}\colon\underline{n}\to\underline{n}$ is the identity on $\underline{n}$. 
	\end{exmp}
	
	Since $\delta_{i}\colon X_{n}\to X_{n-1}$ can be thought of as taking $X_{n}$ to its $i$th face, the horn is essentially $[n]$ with all but its $i$th face. The horn is, in fact, a simplicial set, and it can be embedded into $[n]$ by an inclusion mapping $\iota$. \par
	For the purposes of $\infty$-categories, $[n]$ and its k-horns are the most important simplicial sets. Indeed, it is the existence of certain simplicial maps (and the corresponding commutative diagrams) which determine if a simplicial set is an $\left(\infty,1\right)$-category. To wit, 
	\begin{definition}
	\label{defn: infinity category}
	An \emph{$\left(\infty,1\right)$-category} $\scat{A}$ is a simplicial set $\scat{A}\colon \Delta^{op} \to \mathbf{Set}$ such that for each diagram
	\begin{center}
	$\xymatrix@=4em{ \Lambda^{n}_{k} \ar[r]^{\omega} \ar[d]_{\iota} & \scat{A} & \left(0 < k < n\right) \\ [n]  }$
	\end{center}
	there exists at least one simplicial map $\alpha\colon[n]\to\scat{A}$ such that 
	\begin{center}
	$\xymatrix@=4em{ \Lambda^{n}_{k} \ar[r]^{\omega} \ar[d]_{\iota} & \scat{A} & \left(0 < k < n\right) \\ [n] \ar[ur]_{\alpha}  }$
	\end{center}
	is a commutative diagram. 
	\end{definition}
	
	The $\left(\infty,1\right)$-categories are the objects of subcategory $\mathbf{\infty-Cat}$ of $\mathbf{sSet}$. Some $\left(\infty,1\right)$-categories are intimately connected to categories according to the following theorem, which tells us that the notion of $\left(\infty,1\right)$-category is a generalization of the notion of the nerve of a category.
	
	\begin{thm}
	\label{thm: nerve of a category characterization}
	An \emph{$\left(\infty,1\right)$-category} $\scat{A}$ is the nerve of a category $\mathcal{C}$ if and only if
	there exists a \emph{unique} simplicial map $\alpha\colon[n]\to\scat{A}$ such that 
	\begin{center}
	$\xymatrix@=4em{ \Lambda^{n}_{k} \ar[r]^{\omega} \ar[d]_{\iota} & \scat{A} & \left(0 < k < n\right) \\ [n] \ar[ur]_{\alpha}  }$
	\end{center}
	is a commutative diagram for all such diagrams.
	\end{thm}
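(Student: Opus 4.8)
The plan is to prove both implications, using as the central bridge the defining feature of the nerve recorded in Example \ref{exmp: nerve of a category}: an $n$-simplex of $\mathcal{N}(\mathcal{C})$ is \emph{exactly} a chain of $n$ composable morphisms of $\mathcal{C}$. Consequently such a simplex is completely recovered from its \emph{spine} --- the chain of edges $\{0,1\},\{1,2\},\dots,\{n-1,n\}$ together with their vertices --- and this ``spine determines the simplex'' principle is what I would translate back and forth into the horn-filling condition.

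For the forward direction, suppose $\scat{A}=\mathcal{N}(\mathcal{C})$. First I would observe that for an inner horn, i.e. $0<k<n$ (which forces $n\geq 2$), the spine of $[n]$ is already contained in $\Lambda^{n}_{k}$, because each edge $\{i-1,i\}$ lies in some face $\delta_{j}$ with $j\neq k$ (a routine index check, easiest when $n=2$ where $k=1$ meets both edges). A simplicial map $\omega\colon\Lambda^{n}_{k}\to\mathcal{N}(\mathcal{C})$ then restricts to a composable chain $x_{0}\xrightarrow{f_{1}}\cdots\xrightarrow{f_{n}}x_{n}$ in $\mathcal{C}$, and since an $n$-simplex of the nerve \emph{is} such a chain, this data names a unique $n$-simplex $\alpha\in\mathcal{N}(\mathcal{C})_{n}$. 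This $\alpha$ is the required filler, and it is the \emph{only} filler because any competitor shares the same spine and a nerve simplex is pinned down by its spine. I would close the direction by checking that $\alpha$ genuinely restricts to $\omega$ on every face $\delta_{j}$ with $j\neq k$, which again holds because each such face is itself a nerve simplex determined by its (sub-)spine.

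For the converse, suppose $\scat{A}$ satisfies the unique inner-horn-filling condition, and build a category $\mathcal{C}$ directly. Take the objects to be the $0$-simplices $\scat{A}_{0}$, let the morphisms from $x$ to $y$ be the $1$-simplices with initial vertex $x$ and final vertex $y$ (as recorded by the two face maps $\delta_{0},\delta_{1}$), and set $1_{x}=\sigma_{0}(x)$. To compose $f\colon x\to y$ with $g\colon y\to z$, I would fill the inner horn $\Lambda^{2}_{1}\to\scat{A}$ whose faces $\delta_{2},\delta_{0}$ are $f,g$, and \emph{define} $g\circ f$ to be the remaining face $\delta_{1}$ of the unique filler; uniqueness of that filler is precisely what makes composition well-defined. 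The unit laws fall out of the degeneracy $2$-simplices $\sigma_{0},\sigma_{1}$, and associativity is extracted from the unique filler of an inner $3$-horn ($\Lambda^{3}_{1}$ or $\Lambda^{3}_{2}$), whose $2$-dimensional faces force $(hg)f=h(gf)$. Finally I would exhibit a simplicial isomorphism $\scat{A}\cong\mathcal{N}(\mathcal{C})$ by proving, by induction on $n$, that every $n$-simplex of $\scat{A}$ is determined by its spine; the inductive step reassembles an $n$-simplex by repeatedly filling inner horns, using uniqueness to match the decomposition of $\mathcal{N}(\mathcal{C})_{n}$ into an iterated fibered product of copies of $\scat{A}_{1}$ over $\scat{A}_{0}$.

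The main obstacle I expect lies in the converse, and specifically in the passage from ``composition is well-defined and associative'' to the full simplicial isomorphism. Checking the category axioms is a finite computation in dimensions $2$ and $3$, but the inductive reconstruction of arbitrary $n$-simplices from their spines --- the claim that unique \emph{inner}-horn filling upgrades to the strong Segal condition in every dimension --- is where the real bookkeeping lives. The delicate point is ensuring that the simplex produced at stage $n$ restricts correctly to \emph{all} lower faces at once, so that the bijections $\scat{A}_{n}\cong\mathcal{N}(\mathcal{C})_{n}$ are natural with respect to every simplicial operator; uniqueness of inner-horn fillers is exactly the lever that forces this naturality, and carefully separating the inner faces from the two outer faces throughout the induction is the step that demands the most care.
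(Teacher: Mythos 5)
The paper states Theorem \ref{thm: nerve of a category characterization} without proof --- it is a standard result (it appears as Proposition 1.1.2.2 in \cite{LURIEHTT}) --- so there is no in-paper argument to compare yours against; I can only assess your proposal on its own terms. What you describe is the standard proof, and it is correct in outline. Both halves rest on the right observation: an $n$-simplex of a nerve is determined by its spine, and the spine of $[n]$ is contained in every inner horn $\Lambda^{n}_{k}$ (your index check for $0<k<n$ is right, and it is exactly where innerness is used).

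Two places deserve more care than the outline gives them. In the forward direction, uniqueness of the filler does follow from spine-determination, but the verification that your candidate $\alpha$ restricts to $\omega$ on each face $\delta_{j}$ with $j\neq k$ needs more than the spine of $[n]$: the face on the vertex set $\left\{0,\dots,n\right\}\setminus\left\{j\right\}$ has the ``long edge'' $\left\{j-1,j+1\right\}$ in its own spine, and you must know that $\omega$ sends that edge to the composite $f_{j+1}\circ f_{j}$. This holds because the $2$-simplex on $\left\{j-1,j,j+1\right\}$ already lies in the inner horn whenever $j\neq k$, so $\omega$ carries a commutative triangle there; that is the step to write out explicitly. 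In the converse, your construction of $\mathcal{C}$ (composition from unique $\Lambda^{2}_{1}$-fillers, units from the degeneracies $\sigma_{0},\sigma_{1}$ together with uniqueness of the $\Lambda^{2}_{1}$-filler, associativity from a $\Lambda^{3}_{1}$- or $\Lambda^{3}_{2}$-filler) is exactly right, and you correctly identify that the remaining content is the inductive upgrade from unique inner-horn filling to the statement that every $n$-simplex of $\scat{A}$ is determined by, and reconstructible from, its spine, naturally in all simplicial operators. That induction is genuinely the bulk of the proof and your proposal only gestures at it, but the strategy --- build the simplex by iterated inner-horn filling and use uniqueness of fillers to force agreement on all faces and degeneracies --- is the one that works.
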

	
	
	\subsection{The Euler characteristic for $\left(\infty,1\right)$-categories}
	Our goal in this section is to find some function $\chi\colon\ob\left(\mathbf{\infty-Cat}\right)\to\mathbb{R} \cup \left\{+\infty\right\}$ that satisfies the following properties:
	\begin{enumerate}
	\item If $\scat{X}$ is the $\left(\infty,1\right)$-category which has all objects the empty set, $\EC{\scat{X}} = 0$.
	\item If $\scat{X}$ is the one-point space as an $\left(\infty,1\right)$-category, $\EC{\scat{X}} = 1$
	\item If $\scat{X}$ and $\scat{Y}$ are $\left(\infty,1\right)$-categories, $\EC{\scat{X} \prod \scat{Y}} = \EC{\scat{X}} * \EC{\scat{Y}}$.
	\item If $\scat{X}$ and $\scat{Y}$ are $\left(\infty,1\right)$-categories, $\EC{\scat{X} \coprod \scat{Y}} = \EC{\scat{X}} +  \EC{\scat{Y}}$.
	\item If $\scat{X}$ and $\scat{Y}$ are weakly equivalent $\left(\infty,1\right)$-categories and $\EC{\scat{X}}$ then $\EC{\scat{X}} = \EC{\scat{Y}}$.
	\item If $\scat{X}$ is the nerve of a category $\mathcal{C}$ then $\EC{\scat{X}} = \EC{\mathcal{C}}$. 
	\end{enumerate}
	
	It is likely that this list of desiderata is incomplete, and we ought to find out how the Euler characteristic behaves for other limits and colimits in order to have additional criteria. As it stands, it is unclear if such a function exists or if such a function is unique. If the function is not unique, what does this mean for Euler characteristic? Is there a \emph{right} choice for $\chi$? \par
	We do not have answers to these questions due to several difficulties. One difficulty is that, while Lurie presents a truncation process by which we can get \emph{Lurie n-categories} from $\left(\infty,1\right)$-categories, these n-categories do not correspond perfectly to the $n$-categories we worked with in the course of our study. Furthermore, we do not have a notion of Euler characteristic for Lurie n-categories, and we are also unaware of a process of switching between Lurie n-categories and those which our sketch addresses. 
	
	\section{Conclusions and Future Work}
	In these notes, we reviewed the construction of the Euler characteristic for finite categories and presented a generalized construction for finite weak n-categories. Since composition was only used to define the equivalence of objects and the equivalence of n-categories (which did not require strict equality with respect to units and associativity), we did not have to restrict ourselves to working with strict n-categories. As a tradeoff, however, we lost the straightforward definition of n-categories as categories enriched over the category of $(n-1)$-categories, which made it hard to work proofs out in full generality. We also briefly discussed $\left(\infty,1\right)$-categories and presented some of the obstacles to a construction of Euler characteristic. Ultimately, we were unable to present the Euler characteristic for $\left(\infty,1\right)$-categories. \par
	A number of directions for future research present themselves. Fixing a definition of (weak) n-category, it would be interesting for our sketch of the Euler characteristic for n-categories to be worked out rigorously in the general case. Additionally, the Euler characteristic for $\left(\infty,1\right)$-categories remains to be defined; more or less ambitious would be taking some of the na\"{\i}ve approaches and seeing when and how they agree or disagree with what the Euler characteristic of an $\left(\infty,1\right)$-category ``should be.''
	 
	\bibliography{references}
	\bibliographystyle{alpha}

\end{document}